\newtheorem{thm}{Theorem}[section]
\newtheorem{prop}[thm]{Proposition}
\newtheorem{cor}[thm]{Corollary}
\newtheorem{lem}[thm]{Lemma}
\newtheorem{ex}[thm]{Example}
\newtheorem{rem}[thm]{Remark}
\newtheorem{conj}[thm]{Conjecture}
\newtheorem{con}[thm]{Construction}
\numberwithin{equation}{section}
\numberwithin{table}{section}
\newcommand{\C}{\mathbb{C}}
\newcommand{\tr}{\mathrm{tr}}
\newcommand{\cbc}{\mathrm{cbc}}
\newcommand{\barth}{\mathcal{Z}}
\newcommand{\Spec}{\mathrm{Spec}}
\newcommand{\bmx}{\begin{bmatrix}}
\newcommand{\emx}{\end{bmatrix}}
\renewcommand{\phi}{\varphi}
\newcommand{\pr}{\dagger}
\newcommand{\DZZ}{DZ$\bar{\textrm{Z}}$}
\newcommand{\DZb}{D$\barth$}
\newcommand{\DZZb}{D$\barth \bar \barth$}
\begin{document}

\title{Distinguishing graphs with zeta functions and generalized spectra}
\date{\today}
\author[Christina Durfee]{Christina Durfee}
\address[Christina Durfee]{Department of Mathematics, University of Oklahoma, Norman, OK 73019}
\email{cdurfee@math.ou.edu}

\author[Kimball Martin]{Kimball Martin$^\ast$}
\address[Kimball Martin]{Department of Mathematics, University of Oklahoma, Norman, OK 73019}
\email{kmartin@math.ou.edu}
\thanks{${}^\ast$Partially supported by a Simons Collaboration Grant.}

\keywords{Ihara zeta function, Bartholdi zeta function, cospectral graphs}
\subjclass[2010]{05C50, 05C30}

\begin{abstract}
Conjecturally, almost all graphs are determined by their spectra.  This problem has also been studied for variants such as the spectra
of the Laplacian and signless Laplacian.  Here we consider the problem of determining graphs with
Ihara and Bartholdi zeta functions, which are also computable in polynomial time.
These zeta functions are geometrically motivated, but can be viewed as certain generalizations of
characteristic polynomials.  After discussing some graph properties determined by zeta functions, we show that large classes of cospectral
graphs can be distinguished with zeta functions and enumerate graphs distinguished by zeta functions on $\le 11$ vertices.
This leads us to conjecture that almost all graphs which are not determined by their spectrum are determined by zeta functions.

Along the way, we make some observations about the usual types of spectra and disprove a conjecture of Setyadi and Storm
about Ihara zeta functions determining degree sequences.
\end{abstract}

\maketitle

\section{Introduction}

A fundamental problem in spectral graph theory is: when can we distinguish (unlabeled simple) graphs by their spectra?
If the answer were always, as was once conjectured, that would mean we could solve the graph isomorphism problem
in polynomial time.  However, many pairs of non-isomorphic cospectral graphs have
since been found, and various constructions of
cospectral pairs are known.  Nevertheless, Haemers conjectured that almost all graphs are determined by their spectra (DS),
i.e., the fraction of graphs of order $n$ which are DS goes to 1 as $n \to \infty$.
In fact, from the numerical data for $n \le 11$ in Haemers--Spence \cite{HS}, it appears that more graphs
are determined by their Laplacian spectra ($L$-DS), and even more by their signless Laplacian spectra ($|L|$-DS).
We refer to surveys by van Dam and Haemers for more details \cite{vDH1}, \cite{vDH2}.

This question of which graphs are DS can be thought of geometrically.  Knowing the adjacency spectrum of $G$ is equivalent to knowing
what one might call the {\em walk length spectrum} of $G$---the set $\{ (\ell, w_G(\ell)) : \ell \ge 0 \}$, where $w_G(\ell)$ is the number of
closed walks of length $\ell$ on $G$---together with the order $n$ of $G$.  Hence the above question can be stated:
when do the order and walk length spectrum determine $G$?

From the point of view of Riemannian geometry and covering space theory, it is more natural to look at {\em geodesics} on $G$ rather than arbitrary walks.  Roughly, geodesics
are paths with no backtracking and they correspond to lines in the universal cover of $G$ (see
Section \ref{sec:ihara}).  The {\em geodesic length spectrum} of $G$ is then the set
$\{ (\ell, a_G(\ell)) : \ell \ge 0 \}$ of numbers of (primitive)
closed geodesics of length $\ell$ for all $\ell$.
For graphs, the Ihara zeta function $Z_G(t)$ of $G$ encodes the geodesic length spectrum, and knowing one is equivalent to knowing the other.

This is entirely analogous to the Selberg zeta function encoding the (geodesic) length spectrum for Riemann surfaces.  For compact Riemann surfaces, Huber's theorem says that
knowing the Selberg zeta function, i.e., the length spectrum, is equivalent to knowing the spectrum of the Laplacian (see, e.g., \cite{buser}).
Similarly, for connected regular graphs, knowing the Ihara zeta function
is equivalent to knowing the (adjacency or Laplacian) spectrum (see Section \ref{sec:ihara-prop}).
However, this is not the case for irregular graphs (cf.\ Table \ref{tab:md2}).

Here, we suggest that the Ihara zeta function---which is computable in polynomial time---provides a more effective way to differentiate (irregular) graphs than the usual
spectra studied (Conjecture \ref{conj:main}).
One heuristic for why this should be the case is that geodesics capture much of the geometry of the graph better than arbitrary paths.
Another is that the Ihara zeta function typically encodes more information---at least for md2 graphs (graphs with no vertices of degree $\le 1$),
the reciprocal of the Ihara zeta function is a polynomial whose degree is twice the number of edges, and thus typically has more coefficients than
the characteristic polynomial.

The obvious drawback
of the Ihara zeta function is that it cannot detect leaves or isolated nodes.  For this reason (at least in part), most studies of the Ihara zeta function
restrict to md2 graphs.  Most studies also restrict to connected graphs, which is convenient to discuss
covering space theory, but this is less crucial for the problem of distinguishing graphs.  (As with characteristic polynomials, the zeta function of a disconnected
graph is the product of the zeta functions of its components.)  Instead of making such restrictions here, we consider four methods of addressing this defect
to distinguish graphs (undefined notation explained subsequently):

\begin{enumerate}
\item[(M1)] Use the order and the Ihara zeta $Z_{G^*}(t)$ function of the cone $G^*$ of $G$.  Equivalently use the order and the characteristic polynomial
$\phi_{T^*}(\lambda) = \det(\lambda I - T^*)$.

\item[(M2)] Use the order, size and the Ihara zeta functions $Z_G(t)$ and $Z_{\bar G}(t)$
of $G$ and its complement $\bar G$.
Equivalently, use the order and the characteristic polynomials
$\phi_{T}(\lambda) = \det(\lambda I - T)$ and $\phi_{\bar T}(\lambda) = \det(\lambda I - \bar T)$.

\item[(M3)] Use the order and the Bartholdi zeta function $\barth_G(t,u)$ of $G$.
Equivalently, use the
generalized characteristic polynomial $\phi_{AD}(\lambda, x) = \det(\lambda I - A + xD)$.

\item[(M4)] Use the order and the Bartholdi zeta functions  $\barth_G(t,u)$ and
$\barth_{\bar G}(t,u)$ of $G$ and $\bar G$.
Equivalently, use the generalized characteristic polynomial $\phi_{ADJ}(\lambda, x, y) = \det(\lambda I - A + xD + yJ)$.
\end{enumerate}

Here $I$ and $J$ denote the identity and all-ones matrices
of the appropriate sizes, and $A$ and $D$ denote the adjacency and degree matrices for a fixed ordering of vertices on $G$.
Further, $T$ (resp.\ $T^*$, $\bar T$) denotes Hashimoto's oriented edge matrix (see Section \ref{sec:ihara}) of $G$ (resp.\ $G^*$, $\bar G$)---whose spectrum contains the same
information as $Z_G$ together with the size of $G$ (cf.\ \eqref{eq:hashimoto-det}).
The two-variable Bartholdi zeta function $\barth_G(t,u)$ is a generalization of the
one-variable Ihara function
$Z_G(t): \C \to \C$ which counts ``closed geodesics with $r$ backtracks''---see Section \ref{sec:bartholdi}, in particular Equation \eqref{eq:phiAD} for the relation with $\phi_{AD}$.

Note that knowing $\phi_{AD}$ implies knowing the spectra of $A$, the Laplacian $L$, and
the signless Laplacian $|L|$.
Similarly knowing $\phi_{ADJ}$ implies knowing all of these spectra for both $G$ and $\bar G$.

We will explain the precise motivation behind these choices as we consider each in turn in the body of the paper.
The basic ideas are that the Ihara zeta function cannot detect ``dangling edges,'' i.e., edges to a degree 1 node, but (i) taking cones
or complements turns degree 1 nodes into higher degree nodes and (ii)  the Bartholdi zeta function does detect dangling edges.
In analogy with the DS terminology, we say that a graph $G$ is DZ* (resp.\ \DZZ, \DZb, \DZZb) if it is uniquely determined by
method (M1) (resp.\ (M2), (M3), (M4)).

We will see in Section \ref{sec:M4} that (M4) is provably stronger than each of (M1), (M2) and (M3), i.e.,
being DZ* (resp.\ \DZZ, \DZb) implies \DZZb.  However there are no obvious implications
among (M1), (M2) and (M3) (but there are some indications that (M2) may be stronger than (M1)---see the end of Section \ref{sec5}).  Nevertheless, a surprising outcome of our calculations in
Table \ref{tab:main} is that for
graphs on $n \le 11$ vertices, methods (M1), (M2) and (M4) are always equivalent (whereas (M3) has slightly less discriminating power).
This suggests one gains little by using the Bartholdi zeta function, and one is in practice justified in
just considering the Ihara zeta function, which is both conceptually and computationally simpler.
However, all of these methods run in polynomial time.

\medskip
Now we outline the contents of the paper.

In Section \ref{sec:ihara}, we first recall basic facts about the Ihara zeta function,
and discuss some graph properties it determines.  Then we consider some basic properties determined by
methods (M1) and (M2).  In particular, these zeta invariants force strong restrictions on the degree sequence of a graph.  Setyadi--Storm \cite{SS}, in their enumeration of pairs of connected
md2 graphs with the same Ihara zeta function on $n \le 11$ vertices,
found that the Ihara zeta function of a connected md2 graph determines the degree sequence for $n \le 11$, and conjectured
this holds for all $n$, but we give a counterexample to this conjecture on 12 vertices
(Example \ref{ex:crabsquid}).
Nevertheless, we show that knowing the zeta functions of sufficiently many cones
of $G$ algorithmically determines the degree sequence (Lemma \ref{lem:ds}).

In Section \ref{sec:bartholdi}, we recall basic facts about the Bartholdi zeta function and
discuss the relative strengths of methods (M1)--(M4).  To show that (M4) is stronger than (M1),
we show that two graphs have the same spectra with respect to $A+xD$ and $\bar A+ x\bar D$ for fixed $x$,
 if and only if the same is true of their joins with another
graph (Theorem \ref{thm:cone-comp}).  In particular, two graphs have the same $A$- and $\bar A$- (or
$|L|$- and $\overline{|L|}$-) spectra if and only if the same is true for their cones
(Corollary \ref{cor:cone-comp}; cf.\ Table \ref{tab:cone-comp} for related data).

In Section \ref{sec:constructions}, we show that methods (M1)--(M4)
distinguish a large class of cospectral and Laplacian cospectral
pairs coming from well known constructions:  GM switching, coalescence and join.
On the other hand,
these methods will not distinguish graphs arising from the more restrictive
GM* switching.  Section \ref{sec:new-con} presents a new construction of pairs of graphs
which cannot be distinguished by these methods.  This construction is interesting because
it constructs graphs $G_1$, $G_2$ such that the generalized adjacency matrices $A_1 + xD_1$
and $A_2 + xD_2$ are miraculously conjugate for all $x$ but not ``uniformly conjugate''
(as happens in GM* switching)
i.e., there is no invertible matrix $P$ such that $P(A_2+xD_2)P^{-1} = A_1 + xD_1$
for all $x$.

In Section \ref{sec5}, we enumerate all graphs on $\le 11$ vertices which are not
DZ* (resp.\ \DZZ, \DZb, \DZZb)  in Table \ref{tab:main}.  We state Conjecture \ref{conj:main},
which asserts almost all graphs are DZ* (resp.\ \DZZ, \DZb, \DZZb), and further
that almost all non-DS graphs are DZ* (resp.\ \DZZ, \DZb, \DZZb).
Table \ref{tab:md2} indicates that there is no essential
difference in just using the Ihara zeta function $Z_G$ compared to methods (M1)--(M4)
when restricting to md2 graphs.  This suggests Conjecture \ref{conj:md2}, which is the
analogue of Conjecture \ref{conj:main} for determining md2 graphs $G$ using only $Z_G$.
We also compare the effectiveness of combining different kinds of
spectra for $n \le 11$ in Table \ref{tab:combine}.  In particular, this suggests that
using two of the usual spectra, such as $A$ and $L$ or $A$ and $|L|$, is much more effective
at distinguishing graphs than any single one.

Our calculations were done using Sage \cite{sage}, including nauty \cite{nauty},
and standard Unix tools.  In practice, we found that to
check if two graphs have the same zeta function, it almost always sufficed
to compute the two numbers $\det |L|$ and $\det(4D+2A-3I)$, which are essentially (the
residue of) $Z_G(1)$
and $Z_G(-2)$.
For convenience of the interested reader, when mentioning particular examples of graphs, we
include (non-canonical) {\verb+graph6+} strings to specify the graphs, which can be used
to easily reconstruct the graphs in Sage.

As a final remark, we note that there are more general notions of zeta functions of graphs, such
as path and edge zeta functions (see \cite{ST} or \cite{terras}).  We do not consider these here.

\medskip
We thank C.\ Storm for discussions about \cite{SS}, as well as the referee for helpful comments.

\section{The Ihara zeta function} \label{sec:ihara}

We begin by defining closed geodesics and the Ihara zeta function.
This notion of a geodesic on a graph corresponds to a bi-infinite simple path in the universal
cover, but we do not explain this here---see, e.g., \cite{ST} or \cite{terras}.
Ihara zeta functions are a special case of more general zeta functions of multigraphs
considered by Hashimoto \cite{hashimoto}, generalizing the zeta functions originally defined by Ihara
\cite{ihara}.

Fix a finite (simple) graph $G = (V, E)$.  Let $n = |V|$ and $m = |E|$.  We will denote walks by
sequences of adjacent oriented (or directed) edges $e_i$.  For an oriented edge $e_i = (u, v)$, let
$e_i^{-1} = (v,u)$ be the edge with reversed orientation.
Suppose $\gamma = (e_1, e_2, \ldots, e_\ell)$ denotes a closed
walk of length $\ell$ in $G$.  We say $\gamma$ is
a {\em closed geodesic} if $e_i \ne e_{i+1}^{-1}$ for $1 \le i < \ell$
and $e_1 \ne e_{\ell}^{-1}$.  The former condition is often expressed saying $\gamma$ has no
backtracking and the latter that $\gamma$ has no tails.  Write $k\gamma$ for the concatenation
of $k$ copies of $\gamma$.  We say $\gamma$ is {\em primitive} if $\gamma$ is not of
the form $k \delta$ for a closed geodesic $\delta$ and some $k \ge 2$.

Let $\sigma(\gamma) = (e_2, \ldots, e_\ell, e_1)$.
Then $\sigma(\gamma)$ is also a closed geodesic of length $\ell$, and it is primitive if $\gamma$ is.
Thus the cyclic group $\langle \sigma \rangle$ of order $\ell$  generated
by $\sigma$ acts on all (primitive) closed geodesics of length $\ell$.  The
$\langle \sigma \rangle$-orbits thus partition
the set of (primitive) closed geodesics of length $\ell$ into equivalence classes.
Let $a(\ell)=a_G(\ell)$ denote the number of
primitive closed geodesics of length $\ell$ up to equivalence.

Note that, as $G$ is simple, there are no closed geodesics of length $< 3$.  The closed geodesics of length 3, 4 or 5
are just the cycles of length 3, 4 or 5.  The closed geodesics of length 6
are just the cycles of length 6 together with the concatenations of 2 simple cycles
of length 3 starting at a fixed base point.
If there are two distinct cycles based at a vertex $v_0$, going around the
first cycle $k$ times and going around the second cycle once or more is a primitive closed geodesic.
Hence, for connected md2 graphs, the lengths of primitive closed geodesics are unbounded
unless $G$ is a circuit.

The (Ihara) zeta function of $G$ is
\begin{equation} \label{eq:zeta-def}
Z_G(t) = \prod_\gamma (1-t^{\ell(\gamma)})^{-1}
= \prod_{\ell > 2} (1-t^{\ell})^{-a(\ell)} = \exp \left( \sum_{\ell > 2} \sum_{k \ge 1} a(\ell)  \frac{t^{\ell k}}k \right),
\end{equation}
where, in the first product, $\gamma$ runs over a set of representatives for the equivalences classes of primitive
closed geodesics, and $\ell(\gamma)$ denotes the length of $\gamma$.
Since no closed geodesics will involve degree 0 or degree 1 nodes, $Z_G(t) = Z_{G^\pr}(t)$,
where $G^\pr$ is the ``pruned graph'' obtained by successively deleting degree 0 and degree 1
nodes until one is either left with an md2 graph or the null graph (the graph on 0 vertices).

Note if $G$ is a disjoint union of two subgraphs, $G = G_1 \sqcup G_2$, then
$a_G(\ell) = a_{G_1}(\ell) + a_{G_2}(\ell)$.  Hence the zeta function of a graph is the product
of the zeta functions of its connected components.

The zeta function is {\em a priori} an infinite product, but turns out to be
a rational function and thus is meromorphic on $\C$.
Namely, Hashimoto \cite{hashimoto}, \cite{hashimoto3} and Bass \cite{bass}
gave two determinant formulas for $Z_G$, which have been subsequently
retreated many times (e.g., \cite{ST}).
The Bass determinant formula \cite{bass} is
\begin{equation} \label{eq:bass-det}
 Z_G(t) =  (1-t^2)^{n-m} \det(I - tA + t^2(D-I))^{-1}.
\end{equation}
Note the right hand side is invariant under
adding nodes of degree 0 or 1 to $G$.

Let $\{ e_1, \ldots, e_{2m} \}$ denote the set of oriented edges of $G$.
The {\em oriented edge matrix} (with respect to this ordering of oriented edges) $T$
is the $2m \times 2m$ matrix whose $(i,j)$-entry is 1 if $e_i = (u, v)$, $e_j = (v, w)$ and
$u \ne w$; or 0 otherwise.
The Hashimoto determinant formula \cite{hashimoto}, \cite{hashimoto3}
(cf.\ \cite[Thm 3]{ST}) is
\begin{equation} \label{eq:hashimoto-det}
Z_G(t) = \det(I - tT)^{-1} = (t^{2m} \phi_T(t^{-1}))^{-1},
\end{equation}
where $\phi_T(\lambda) = \det(\lambda I - T)$ is the characteristic polynomial of $T$.
Again, one can check that $\det(I - tT)$
is invariant under adding degree 0 or 1 nodes to $G$.

Since $G^\pr \ne G$ in general, $Z_G$ does not determine $m$ (or $n$).
This means knowing $Z_G$ is not exactly the same as knowing the
spectrum of $T$, but it almost is:  $\phi_T$ determines both $Z_G$ and $m$,
and conversely.  Indeed, the degree of $\phi_T$ is $2m$, so one can recover
$Z_G$ from $\phi_T$.  The converse is obvious.  This observation
relates to method (M2).

\subsection{Properties determined by the Ihara zeta function} \label{sec:ihara-prop}

Here we summarize some elementary graph properties determined by the Ihara zeta function.
This question has previously been considered mainly for md2 and connected md2
multigraphs (e.g., \cite{czarneski} and \cite{cooper}).

First, examining the coefficients of $t^k$ in the logarithm of \eqref{eq:zeta-def}
shows $Z_G$ determines $a(\ell)$ for all $\ell$, hence
knowing $Z_G$ is the same as knowing the primitive geodesic length spectrum.  Note that the number of non-primitive
geodesics of a given length $l$ is the sum of the numbers of primitive geodesics of proper divisors of $l$.  Hence the primitive geodesic length spectrum determines the full geodesic length spectrum and vice versa.
Consequently $Z_G$ determines the number of cycles of length 3, length 4 and length 5
in $G$.  It is not true that $Z_G$ determines the number of cycles of length 6.

\begin{ex} The pair of
graphs {\verb+HheadXZ+} and {\verb+Hhf@eS|+} pictured below,
each with 9 vertices and 18 edges,
have the same zeta functions but a different number of
cycles of length 6 (46 and 50).

\begin{center}
\begin{tikzpicture}
\tikzset{VertexStyle/.style = {shape = circle, fill = black, inner sep = 0pt, outer sep = 0pt, minimum size = 4pt, draw} }
\useasboundingbox (0,0) rectangle (2.0cm,2.0cm);
\Vertex[L=\hbox{$.$},x=0.0303cm,y=1.0779cm]{v0}
\Vertex[L=\hbox{$.$},x=0.7905cm,y=0.0cm]{v2}
\Vertex[L=\hbox{$.$},x=1.6314cm,y=0.2568cm]{v1}
\Vertex[L=\hbox{$.$},x=1.6105cm,y=1.7737cm]{v3}
\Vertex[L=\hbox{$.$},x=0.5603cm,y=1.95cm]{v4}
\Vertex[L=\hbox{$.$},x=0.0cm,y=0.3213cm]{v5}
\Vertex[L=\hbox{$.$},x=2.0cm,y=1.0155cm]{v6}
\Vertex[L=\hbox{$.$},x=0.7469cm,y=0.8281cm]{v7}
\Vertex[L=\hbox{$.$},x=0.9369cm,y=1.3163cm]{v8}
\Edge[](v0)(v1)
\Edge[](v0)(v4)
\Edge[](v0)(v5)
\Edge[](v0)(v7)
\Edge[](v1)(v2)
\Edge[](v1)(v6)
\Edge[](v1)(v8)
\Edge[](v2)(v3)
\Edge[](v2)(v5)
\Edge[](v2)(v7)
\Edge[](v3)(v4)
\Edge[](v3)(v6)
\Edge[](v3)(v8)
\Edge[](v4)(v7)
\Edge[](v4)(v8)
\Edge[](v5)(v7)
\Edge[](v6)(v8)
\Edge[](v7)(v8)
\end{tikzpicture}
\hspace{1in}
\begin{tikzpicture}
\tikzset{VertexStyle/.style = {shape = circle, fill = black, inner sep = 0pt, outer sep = 0pt, minimum size = 4pt, draw} }
\useasboundingbox (0,0) rectangle (2.0cm,2.0cm);
\Vertex[L=\hbox{$.$},x=0.0303cm,y=1.0779cm]{v0}
\Vertex[L=\hbox{$.$},x=0.7905cm,y=0.0cm]{v1}
\Vertex[L=\hbox{$.$},x=1.6314cm,y=0.2568cm]{v2}
\Vertex[L=\hbox{$.$},x=1.6105cm,y=1.7737cm]{v3}
\Vertex[L=\hbox{$.$},x=0.5603cm,y=1.95cm]{v4}
\Vertex[L=\hbox{$.$},x=0.0cm,y=0.3213cm]{v5}
\Vertex[L=\hbox{$.$},x=2.0cm,y=1.0155cm]{v6}
\Vertex[L=\hbox{$.$},x=0.7469cm,y=0.8281cm]{v7}
\Vertex[L=\hbox{$.$},x=0.9369cm,y=1.3163cm]{v8}
\Edge[](v0)(v1)
\Edge[](v0)(v4)
\Edge[](v0)(v5)
\Edge[](v0)(v7)
\Edge[](v1)(v2)
\Edge[](v1)(v5)
\Edge[](v1)(v7)
\Edge[](v2)(v3)
\Edge[](v2)(v6)
\Edge[](v2)(v8)
\Edge[](v3)(v4)
\Edge[](v3)(v6)
\Edge[](v3)(v8)
\Edge[](v4)(v7)
\Edge[](v4)(v8)
\Edge[](v5)(v8)
\Edge[](v6)(v7)
\Edge[](v7)(v8)
\end{tikzpicture}
\end{center}

\label{ex:9verts}
\end{ex}

We remark $Z_G(t) = 1$ if and only if $G$
has no cycles, so $Z_G$ can determine whether a graph is a forest or not.
The girth $g$ of $G$ will be the smallest $\ell$ such that $a(\ell) \ne 0$.  So $Z_G$
determines $g$ and, if $G$ is connected, Scott and Storm \cite{scott-storm}
showed $Z_G$ determines the number of cycles of length $\ell$ for any $\ell < 2g$.

From \eqref{eq:hashimoto-det} it is clear that $Z_{G}(t)^{-1}$ is a polynomial in $t$ of
degree $2m^\pr$, where $m^\pr$ is the size of $G^\pr$.
In general, though $Z_G$ does not determine $m$ or $n$, it
does determine $m = m^\pr$ if $G$ is md2.  If we assume that $G$ is connected
(but not necessarily md2), then  we can say $Z_G$ determines $m-n$.
Namely, if $Z_G(t)=1$ then $G$ is a tree and $m-n=-1$.  Otherwise
$G^\pr$ is not the null graph so $m-n=m^\pr-n^\pr$, and  $m^\pr$ and $n^\pr$ are determined by $Z_G$, as we will see below.
Note that $Z_G$ does not determine $m-n$ if we drop the assumption that $G$ is connected,
since all forests have trivial zeta function.

\medskip
To get clean statements, for the rest of this section assume $G$ is md2 and connected.
For general connected $G$, the results here can be viewed as results about $G^\pr$ when $G^\pr$ is not the null graph.  Note that if $G$ is connected, then $G^\pr$ is also connected.

In this case $m \ge n$, and Hashimoto \cite{hashimoto} showed (i)
$Z_G(t)$ has a pole at $t=1$, which is of order $2= m-n+2$ if $G$ is a
circuit and $m-n+1$ otherwise, and (ii) $Z_G(t)$ has a pole at
$t=-1$ of order $m-n$, $m-n+1$ or $m-n+2$ according to whether
$G$ is non-bipartite, a bipartite non-circuit or a bipartite circuit.
Hashimoto \cite{hashimoto2}
later showed the residue at $t=1$ for non-circuits is
$2^{m-n+1}(n-m) \kappa(G)$, where $\kappa$ denotes complexity of $G$, i.e., the number of
spanning trees of $G$.
Note that $G$ is bipartite if and only if its geodesics all have even length, and recall
$G$ is a circuit if and only if $a(\ell) = 0$ for $\ell$ sufficiently large.

Consequently, $Z_G$ determines $n$, $m$, $\kappa(G)$, whether $G$ is bipartite, and whether $G$ is a circuit.
 In fact, since $\det(I-tA + t^2(D-I))$ is simply $\det(D+A)=\det |L| $ at $t=-1$,
\eqref{eq:bass-det} tells us $\lim_{t \to -1}  (1-t^2)^{n-m} Z_G(t)^{-1} = \det |L|$, which is
0 if and only if $G$ has a bipartite component.  So $Z_G(t)$ also
determines the product of eigenvalues of $|L|$.

Cooper \cite{cooper} also
showed $Z_G$  determines whether $G$ is regular, and if so, the degree of regularity
as well as the spectrum.  In fact if $G$ is $(q+1)$-regular, then the spectrum conversely
determines $Z_G$ by
\begin{equation}
Z_G(t) = (1-t^2)^{n-m} \prod_i (1- \lambda_i t + q t^2)^{-1}
\end{equation}
where $\{ \lambda_1, \ldots, \lambda_n \}$ are the eigenvalues of $A$
(attributed to A.\ Mellein \cite{czarneski}).

Write $V = \{ v_1, \ldots, v_n \}$, $d_i = \deg(v_i)$ and $q_i = d_i - 1$.
We see the leading term of $Z_G(t)^{-1}$ is $(\prod q_i) t^{2m}$.  Consequently, for $G$ md2, $Z_G(t)$ determines $\prod q_i$.
One also knows $2m = \sum d_i$, so knowing $Z_G(t)$
places rather strong restrictions on the degree sequence for md2 graphs.

In fact, Setyadi--Storm \cite{SS}, based on their enumeration of zeta functions of
connected md2 graphs on $\le 11$ vertices,
conjectured that connected md2 graphs with the same Ihara zeta function have
identical degree sequences.  However, we give a counterexample.

\begin{ex}
The graphs {\verb+K??CA?_FEcdk+} and {\verb+K??CA?_ccWNk+} on 12 vertices and 16 edges
drawn below (``the crab and the squid'')
have the same zeta function, but their degree sequences are (5, 5, 4, 2, 2, 2, 2, 2, 2, 2, 2, 2)
and (7, 3, 3, 3, 2, 2, 2, 2, 2, 2, 2, 2).

\begin{center}
\begin{tikzpicture}
\tikzset{VertexStyle/.style = {shape = circle, fill = black, inner sep = 0pt, outer sep = 0pt, minimum size = 4pt, draw} }
\useasboundingbox (0,0) rectangle (2.0cm,2.0cm);
\Vertex[L=\hbox{$.$},x=1.8369cm,y=2.0cm]{v0}
\Vertex[L=\hbox{$.$},x=1.0871cm,y=1.619cm]{v1}
\Vertex[L=\hbox{$.$},x=0.1146cm,y=0.0cm]{v2}
\Vertex[L=\hbox{$.$},x=1.7191cm,y=0.6586cm]{v3}
\Vertex[L=\hbox{$.$},x=0.9024cm,y=0.4341cm]{v4}
\Vertex[L=\hbox{$.$},x=0.9224cm,y=0.0408cm]{v5}
\Vertex[L=\hbox{$.$},x=2.0cm,y=1.666cm]{v6}
\Vertex[L=\hbox{$.$},x=0.6544cm,y=1.2985cm]{v7}
\Vertex[L=\hbox{$.$},x=0.0cm,y=0.4195cm]{v8}
\Vertex[L=\hbox{$.$},x=1.3274cm,y=0.5422cm]{v9}
\Vertex[L=\hbox{$.$},x=1.5846cm,y=1.3204cm]{v10}
\Vertex[L=\hbox{$.$},x=0.466cm,y=0.4402cm]{v11}
\Edge[](v0)(v6)
\Edge[](v0)(v10)
\Edge[](v1)(v7)
\Edge[](v1)(v10)
\Edge[](v2)(v8)
\Edge[](v2)(v11)
\Edge[](v3)(v9)
\Edge[](v3)(v10)
\Edge[](v4)(v9)
\Edge[](v4)(v11)
\Edge[](v5)(v9)
\Edge[](v5)(v11)
\Edge[](v6)(v10)
\Edge[](v7)(v11)
\Edge[](v8)(v11)
\Edge[](v9)(v10)
\end{tikzpicture}
\hspace{1in}
\begin{tikzpicture}
\tikzset{VertexStyle/.style = {shape = circle, fill = black, inner sep = 0pt, outer sep = 0pt, minimum size = 4pt, draw} }
\useasboundingbox (0,0) rectangle (2.0cm,2.0cm);
\Vertex[L=\hbox{$.$},x=0.0629cm,y=1.5131cm]{v0}
\Vertex[L=\hbox{$.$},x=1.9469cm,y=0.9089cm]{v1}
\Vertex[L=\hbox{$.$},x=1.4412cm,y=0.0cm]{v2}
\Vertex[L=\hbox{$.$},x=1.0494cm,y=1.3809cm]{v3}
\Vertex[L=\hbox{$.$},x=0.7386cm,y=0.4375cm]{v4}
\Vertex[L=\hbox{$.$},x=0.8166cm,y=0.7821cm]{v5}
\Vertex[L=\hbox{$.$},x=0.0cm,y=2.0cm]{v6}
\Vertex[L=\hbox{$.$},x=2.0cm,y=0.5947cm]{v7}
\Vertex[L=\hbox{$.$},x=1.7421cm,y=0.0933cm]{v8}
\Vertex[L=\hbox{$.$},x=0.4821cm,y=1.783cm]{v9}
\Vertex[L=\hbox{$.$},x=0.3309cm,y=0.8731cm]{v10}
\Vertex[L=\hbox{$.$},x=1.3762cm,y=0.6385cm]{v11}
\Edge[](v0)(v6)
\Edge[](v0)(v9)
\Edge[](v0)(v10)
\Edge[](v1)(v7)
\Edge[](v1)(v11)
\Edge[](v2)(v8)
\Edge[](v2)(v11)
\Edge[](v3)(v9)
\Edge[](v3)(v11)
\Edge[](v4)(v10)
\Edge[](v4)(v11)
\Edge[](v5)(v10)
\Edge[](v5)(v11)
\Edge[](v6)(v9)
\Edge[](v7)(v11)
\Edge[](v8)(v11)
\end{tikzpicture}
\end{center}

\label{ex:crabsquid}
\end{ex}

\subsection{Zeta functions of cones---(M1)}

The most obvious issue of using zeta functions to distinguish arbitrary graphs
is that no closed geodesics will pass through ``dangling links,'' i.e., paths not contained
in cycles.  To resolve this issue, an obvious thing to try is connecting all the degree 1
nodes to a new vertex.  However, doing this to just the degree 1 nodes is not a nice
operation on graphs (it is not injective), so it makes more sense to look at the
{\em cone} $G^*$ of $G$.  This is just the join of $G$ with a point: $G^* = G \vee K_1$,
where $\vee$ denotes the operation of graph join.  The new vertex in $G^*$ is denoted
$v_{n+1}$.

Clearly $G^*$ has order $n+1$ and increases the degree of each vertex in $G$ by 1.
That is, $G^*$ has degree sequence
$(d_i^*)$ where $d_i^* = d_i+1$ for $1 \le i \le n$ and $d_{n+1}^* = n$.  Hence if $G$ is
an md1 graph (a graph with no degree 0 vertices), then $G^*$ is an md2 graph.  A graph
$H$ on $n+1$ vertices will be the cone of some graph $G$ if and only if there is a vertex
of degree $n$, and $G$ can be recovered by deleting any vertex of degree $n$.

We propose to use $Z_{G^*}$ to study $G$.  Now any edge in $G$ will appear in
some closed geodesic in $G^*$, so it is reasonable to expect that $Z_{G^*}$ encodes
much of the structure of $G$.  Of course, degree 0 nodes in $G$ become degree 1
nodes in $G^*$ and still are not detected by $Z_{G^*}$.  If one wanted to, say,
count the degree 0 nodes only using zeta functions,
one could also look at the zeta function of the double cone $G^{**} = G \vee K_2$.
Instead, we will simply fix $n$, and see how to determine the number of degree 0 nodes from $n$ and $Z_{G^*}$.

\medskip
Hence, for the rest of this section, we will assume we know the order $n$ of $G$ (except
where stated otherwise)
and see what can be deduced about $G$ from $Z_{G^*}$.

We first show that we can determine the number of degree 0 nodes from $n$ and $Z_{G^*}$.
Let $G'$ be the graph obtained from $G$ by removing
all degree 0 nodes.  Note $G$ is the empty graph on $n$ vertices if and only if $Z_{G^*} = 1$.
Otherwise, $(G^*)^\pr = (G')^*$ and $(G')^*$ is connected md2,
so $Z_{G^*}$ determines the number of vertices and edges for $(G')^*$
and hence also for $G'$.  Combining this with knowing $n$, we can determine
$G$ from $G'$.  Hence in considerations below we may and will assume $G=G'$, i.e.,
$G$ has no degree 0 nodes, thus $G^*$ is connected md2.

From the previous section, we immediately see $Z_{G^*}$ determines the following:
$n$, $m$, $\prod d_i$, and the number of triangles in $G$, which equals
$\frac 12 a_{G^*}(3) - m$.  The latter follows as a 3-cycle in $G^*$ corresponds to
either a 3-cycle in $G$ or a triangle formed from an edge in $G$ with $v_{n+1}$.
Similarly, $a_{G^*}(4) = a_{G}(4) + 2\sum \binom{d_i}{2}$,
since the latter term is the number of directed paths of length 2 in $G$.
In particular, $Z_{G^*}$ determines $a_G(4) + \sum d_i^2$.
Also, one knows $a_{G^*}(5)$, which is $a_G(5)$ plus the number of directed paths of length
3 in $G$.

An elementary consequence of knowing $n$, $2m = \sum d_i$ and $\prod d_i$ is the following:
if we fix $a, b$ and $c$ and know that
each vertex has degree $a$, $b$ or $c$, then $Z_{G^*}$ determines
the degree sequence.
For general $G$, we remark that
one can determine the degree sequence by looking at sufficiently many
cones.  Namely, let $G^{*(r)} = G \vee K_r$.

\begin{lem} Let $G$ be a graph of possibly unknown order $n \ge 1$.  There exists a finite number $r$ such that if
$H_1, \ldots, H_{r}$ is any known sequence of graphs with distinct orders, then
$Z_{G \vee H_1}, \ldots, Z_{G \vee H_{r}}$ determines the degree sequence of $G$.  One may take $r$ to be at most the order of $G$ (or 3 for $G$ with order less than 3).  In particular, $Z_{G^*}, \ldots, Z_{G^{*(r)}}$ determines the  degree sequence of $G$.
\label{lem:ds}
\end{lem}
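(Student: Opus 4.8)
The plan is to extract from each zeta function $Z_{G\vee H_j}$ a single new number — essentially $\prod_i q_i^{(j)}$ where $q_i^{(j)} = d_i + |H_j| - 1$ is the modified degree of the $i$-th vertex of $G$ inside $G\vee H_j$ — and to argue that knowing these products for enough choices of the shift $|H_j|$ forces the multiset $\{d_1,\dots,d_n\}$. First I would handle the trivial pruning: by the discussion preceding the lemma, for each $j$ the graph $G\vee H_j$ is md2 (as soon as $|H_j|\ge 1$ and $G$ is nonempty; the empty-graph and tiny cases are dealt with separately, which is why $r=3$ suffices for order $<3$), and $Z_{G\vee H_j}$ is a polynomial whose leading coefficient, as recorded in Section~\ref{sec:ihara-prop}, is $\prod_{v}(\deg_{G\vee H_j}(v)-1)$. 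Since the vertices of $H_j$ contribute a factor depending only on the known graph $H_j$, dividing it out leaves exactly $P_j := \prod_{i=1}^n (d_i + h_j - 1)$, where $h_j := |H_j|$. Thus the data $Z_{G\vee H_1},\dots,Z_{G\vee H_r}$ gives us the values at $x = h_1-1,\dots,h_r-1$ of the polynomial
\[
F(x) \;=\; \prod_{i=1}^n (x + d_i),
\]
a monic polynomial of degree $n$ in $x$ whose roots are exactly the negatives of the degrees of $G$.

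The key step is then a counting/interpolation argument: a monic polynomial of degree $n$ is determined by its values at $n+1$ distinct points, so if $r$ distinct shifts $h_j$ are available with $r$ large enough relative to $n$, we recover $F$ and hence its root multiset $\{d_i\}$. But we must be careful that $n$ is \emph{not} assumed known. Here I would use that the $d_i$ are nonnegative integers bounded by $n-1$, so $F(1) = \prod_i(1+d_i) \le n^{\,n}$, which in turn bounds $n$ once we know $F(1)$ — and $F(1) = P_{j_0}$ for whichever $H_{j_0}$ has $h_{j_0}=2$, i.e.\ $H_{j_0}=K_2$, so with the cone tower $H_j = K_j$ we directly have this value. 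More simply: since each $d_i \in \{0,1,\dots,n-1\}$ and $n = \deg F$, once we have at least one evaluation we can read off $n$ (e.g.\ $P_j$ with $h_j$ large grows like $h_j^{\deg}$, or we just use the already-established fact that $Z_{(G')^*}$ determines $n$). Having pinned down $n$, taking $r \ge n+1$ distinct orders $h_1,\dots,h_r$ — in particular the cone tower $h_j = j$ for $j=1,\dots,r$ — lets us Lagrange-interpolate $F$ from its values $P_1,\dots,P_{n+1}$ and then factor it over $\Z$ to read off the degree sequence. Since we may discard surplus graphs, $r = \max(n,3)$ suffices (one of the $n+1$ evaluation points, say the contribution already used to determine $n$, is redundant once $n$ is known; and the "$3$" absorbs the small-order exceptions where $G$ might be empty or a single edge and the md2 pruning needs a bit of slack).

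The main obstacle, and the point needing the most care, is the non-uniformity caused by not knowing $n$ in advance: a priori we do not know how many evaluations we need before we even start. The clean way around this is to observe that $n$ itself is recoverable from any one of these zeta functions via the pruning argument ($Z_{(G')^*}$ determines $|V(G')|$, and combined with $n$... — but here $n$ is exactly what we want, so instead I would note that $\deg Z_{G\vee H_j}^{-1} = 2\,|E(G\vee H_j)| = 2(|E(G)| + \binom{h_j}{2} + n h_j)$, which for known $H_j$ determines $2|E(G)| + 2n h_j$; doing this for two different $H_j$ eliminates $|E(G)|$ and yields $n$). Once $n$ is in hand, everything else is the finite interpolation above, and the only remaining checks are the degenerate small cases (order $0$, $1$, $2$) and verifying that each $G\vee H_j$ really is md2 so that the "leading coefficient $=\prod(\text{degree}-1)$" formula applies — both routine. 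The bound "$r\le$ order of $G$" then follows by bookkeeping: $n+1$ evaluations suffice, but one is consumed in determining $n$ (via the edge-count comparison, which reuses two of the zeta functions), netting $r=n$ for $n\ge 3$.
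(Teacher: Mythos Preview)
Your approach is correct in outline and genuinely different from the paper's. Both proofs begin the same way: for $h_j\ge 2$ and $n\ge 2$ the join $G\vee H_j$ is connected md2, so one can read off $n$, $m$, and the product $P_j=\prod_i(d_i+h_j-1)$ from $Z_{G\vee H_j}$ (after dividing out the known $H_j$ contribution). From here the arguments diverge. You interpret the $P_j$ as evaluations of the monic degree-$n$ polynomial $F(x)=\prod_i(x+d_i)$ and recover $F$ by Lagrange interpolation, then factor to get the degree sequence. The paper instead introduces unknowns $x_i=\#\{v:\deg v=i\}$, takes logarithms to obtain linear equations $\sum_i x_i\ln(i+h_j-1)=\ln P_j$, adjoins the two equations $\sum x_i=n$ and $\sum i\,x_i=2m$, and proves the resulting coefficient matrix is invertible by showing that a nontrivial linear combination of $1,\,t,\,\ln(t+h_1-1),\dots,\ln(t+h_r-1)$ can have at most $r+1$ real zeros.

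Your route is considerably more elementary---pure polynomial interpolation, no logarithms or zero-counting for transcendental functions---and it is the argument the paper itself later invokes informally (see the sentence after Proposition~4.4). The paper's route, on the other hand, is marginally sharper: working with the multiplicity vector $(x_0,\dots,x_{n-1})$ and using both auxiliary equations lets it reach $r=n-2$ when all $h_j\ge 2$, whereas straight interpolation of a monic polynomial with one known subleading coefficient gives $r=n-1$. Since the lemma only claims $r\le n$, this does not matter.

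Two small things to clean up. First, your parenthetical ``$G\vee H_j$ is md2 as soon as $|H_j|\ge 1$'' is false when $|H_j|=1$ and $G$ has isolated vertices; you need $h_j\ge 2$ there, exactly as the paper does, and your later ``routine check'' should say so. Second, it is $Z_{G\vee H_j}^{-1}$, not $Z_{G\vee H_j}$, that is a polynomial with the stated leading coefficient. Your bookkeeping on $n$ versus $n+1$ evaluation points is a bit tangled (a monic degree-$n$ polynomial needs only $n$ values), but the final bound $r\le n$ survives.
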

\begin{proof}
We will take $r$ to be at least 3, in order to guarantee that one of the $H_j$ has order at least 2 (we allow one of the $H_j$'s to be the null graph).  We will assume that we know the degree sequences of the $H_j$ and describe how the degree sequence of $G$ can be computed from $Z_{G \vee H_1}, \ldots, Z_{G \vee H_{r}}$.

Let $H$ be one of the $H_j$'s of order $h\geq 2$.  Then $G\vee H$ is connected and if $n\geq 2$ it is also md2.  It follows that the order of $(G\vee H)^\pr$ is less than $h+2$ if and only if $n=1$.  Otherwise it has order $h+n$ and, in either case, we can find $n$ from $Z_{G\vee H}$.  Assume from now on that $n\geq 2$.  Then $Z_{G\vee H}$ determines the number of edges of $G\vee H$ from which we can compute the number of edges of $G$, which we will call $m$.

Let $x_i$ be the number of vertices of $G$ of degree $i$ for $0\leq i \leq n-1$.  From $n$ and $m$ we have the two linear equations $\sum_{i=0}^{n-1} x_i = n$ and $\sum_{i=0}^{n-1} i x_i = 2m$.  Let $h_j$ be the order of $H_j$ for $1\leq j \leq r$.  First assume that each $h_j$ is at least 2 so $G\vee H_j$ are all connected md2 graphs.  Then $Z_{G\vee H_j}$ determines the products of the degrees of $G\vee H_j$ minus 1.  The degrees coming from vertices of $H_j$ are known and hence $Z_{G\vee H_j}$ will determine $\prod_{i=0}^{n-1} (i +h_j - 1)^{x_i}$, or equivalently, $\sum_{i=0}^{n-1} \ln(i+h_j-1)x_i$.  Thus we have a linear system with $r+2$ equations and $n$ unknowns.  The coefficient matrix $A$ of this linear system has $1, i, \ln(i+h_1 -1), \ln(i+h_2 - 1), ..., \ln(i+h_r - 1)$ as
the entries of its $i$-th column, for $0\leq i \leq n-1$.  If $\mathbf{c}= \bmx c_1 & c_2 & ... &c_{r+2}\emx$ is a vector with $\mathbf{c}A = \mathbf{0}$, then the function $f(t) = c_1 + c_2t+ c_3\ln(t + h_1-1) + c_4\ln(t+h_2-1)+...+ c_{r+2}\ln(t+h_{r}-1)$ has zeros at $0, 1, 2,...,n-1$.  As the $h_j$'s are distinct, this function changes direction at most $r$ times and thus has at most $r+1$ zeros unless all $c_i$ are zero.  If we take $r=n-2$ then the $c_i$ must all be 0 so $A$ is an invertible $n\times n$ matrix and the linear system has exactly one solution.  If we drop the assumption that the $h_j$'s are all at least 2, then at worst we can ignore $Z_G$ and $Z_{G^*}$ and take $r=n$.
\end{proof}

\begin{rem}
If we allow $n \ge 0$ in the above proposition, the order of $G$ can be determined using the $Z_{G\vee H_j}$'s, with one exception: if all the $H_j$'s are empty graphs, then the
$Z_{G\vee H_j}$ will not distinguish between the null graph and the graph on one vertex.  Also,
using fewer than $n$ joins often suffices to determine the degree sequence of $G$.  For
instance, the proof shows $n-2$ works if all $h_j$'s are $\ge 2$.  It also ignores the degree information gotten from $Z_{G^*}$ if one of the $H_j$ has order 1.  If we also take into account that the $x_i$ must be nonnegative integers, this could significantly decrease the number of joins needed in many cases.
\end{rem}

We also comment that for regular graphs, knowing $Z_{G^*}$ is
equivalent to knowing the spectrum of $G^*$ \cite{BS}.

\subsection{Zeta functions of graphs and their complements---(M2)}

We begin by thinking of another way to encode the dangling nodes or links
in the zeta function.  By a dangling node (link), we mean a vertex (edge)
not contained in any cycle, i.e., any node (edge) not in $G^\pr$.

First note that if we fix $n$, we know the number $n - n^\pr$ of dangling nodes,
since $Z_G$ determines the order $n^\pr$ of $G^\pr$.  In particular, $n$ and
$Z_G$ will tell us if $G$ is connected md2.

Recall that knowing the spectrum of $T$ gives slightly more information than
just $Z_G$---it also tells us $m$.  Since $Z_G$ tells us $m^\pr$, knowing
$\phi_T$ tell us $m-m^\pr$ which is the number dangling links.
However, this tells us nothing about the structure of dangling links---e.g.,
$\phi_T$ cannot distinguish among a cycle $C_n$ with leaves added, $C_n$ with a
path attached and $C_n$ disjoint union a forest, provided the number of edges
match.

If we want to somehow account for dangling nodes with zeta functions,
we can also try looking at both $G$ and $\bar G$.  Degree 0 and
degree 1 vertices in $G$ now have degree $n-1$ and $n-2$ in $\bar G$,
so for $n > 3$ they will now have degree at least 2.  However, it is still possible
that some of these vertices are dangling nodes in $\bar G$.  Nevertheless,
we can show the following.

\begin{prop} \label{prop:M2}
Let $G$ be a graph of order $n$.  Then at least one of the following holds:

(i) $G$ is determined by $\phi_T$ and $\phi_{\bar T}$ (or equivalently, by $Z_G$, $Z_{\bar G}$ and $m$); or

(ii) for any vertex $v \in V$, we have $v \in G^\pr$ or $v \in (\bar G)^\pr$.
\end{prop}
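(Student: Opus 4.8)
The plan is to prove the statement in contrapositive form: assuming (ii) fails, fix a vertex $v$ with $v\notin G^\pr$ and $v\notin(\bar G)^\pr$, and deduce (i). First I would dispose of small $n$ (say $n\le 3$) directly, since graphs on at most three vertices are determined by $(n,m)$, and $\phi_T,\phi_{\bar T}$ determine $n$, $m$, $\bar m$, $Z_G$ and $Z_{\bar G}$; so assume $n$ is large. The key local observation I would establish is: if $w\notin G^\pr$, then the neighbors of $w$ lie in pairwise distinct connected components of $G-w$ --- otherwise a path between two of them in $G-w$, together with $w$, is a cycle through $w$, which would put $w$ in the $2$-core $G^\pr$. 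Hence $G-w$ has at least $\deg_G(w)$ components. Applying this to $v$ in $G$ and in $\bar G$, and using $\bar G-v=\overline{G-v}$ together with the elementary fact that the complement of a disconnected graph is connected, I get that $\deg_G(v)\ge 2$ forces $\bar G-v$ to be connected, hence $\deg_{\bar G}(v)=n-1-\deg_G(v)\le 1$. Thus $\deg_G(v)\in\{0,1,n-2,n-1\}$, and since $G$ is determined by $(\phi_T,\phi_{\bar T})$ if and only if $\bar G$ is (they carry the same data, with the two entries interchanged), I may assume $\deg_G(v)\in\{0,1\}$.

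Next I would run the short case analysis. If $\deg_G(v)=0$, then $\bar G-v$ has at least $n-1$ components on $n-1$ vertices, hence is edgeless, $G-v=K_{n-1}$, and $G=K_{n-1}\sqcup K_1$. If $\deg_G(v)=1$ with $v$ a pendant at $p$, then $\bar G-v$ has at least $n-2$ components on $n-1$ vertices, hence is either edgeless or has exactly one edge; using that $p$ is the unique vertex of $V\setminus\{v\}$ non-adjacent to $v$ in $\bar G$, and that $p$ must be an endpoint of that edge when it exists (otherwise $v$ lies on a triangle in $\bar G$), I find that either $G=K_{n-1}$ with a pendant vertex, or $G=(K_{n-1}-e)$ with a pendant vertex attached at an endpoint of $e$. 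In each of the three resulting shapes one checks directly that $\bar G$ is a forest, with either $1$ or $2$ components.

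For the reconstruction, let $H$ be any graph with $\phi_{T(H)}=\phi_T$ and $\phi_{\bar T(H)}=\phi_{\bar T}$. Then $Z_{\bar H}=Z_{\bar G}=1$, so $\bar H$ is a forest; $\bar H$ has $\bar m$ edges, hence the same number of components as $\bar G$; and $H=\overline{\bar H}$. Since $\bar H$ is triangle-free, the number of triangles of $H$ --- which $Z_H=Z_G$ determines --- equals $\binom n3-\bar m(n-2)+\sum_x\binom{\deg_{\bar H}(x)}{2}$, so $Z_G$ pins down $\sum_x\deg_{\bar H}(x)^2$. I would then use that, among forests on $n$ vertices with a prescribed number of components, a single star together with isolated vertices uniquely maximizes $\sum_x\deg(x)^2$: in the cases $G=K_{n-1}\sqcup K_1$ and $G=K_{n-1}$ with a pendant, $\bar G$ attains this maximum, forcing $\bar H\cong\bar G$; in the remaining case a short computation from $\sum_x\deg_{\bar H}(x)=2\bar m$ and the known value of $\sum_x\deg_{\bar H}(x)^2$ shows the only admissible degree sequence for the tree $\bar H$ is $(n-2,2,1,\dots,1)$, which has a unique realization, again isomorphic to $\bar G$. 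In every case $H=\overline{\bar H}\cong\overline{\bar G}=G$, giving (i).

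I expect the main obstacle to be this last step: carrying out the extremal-degree-sequence and unique-realization arguments cleanly in each of the three shapes, and checking the boundary behavior for the smallest $n$ for which $K_{n-1}$, resp.\ $K_{n-1}-e$, is already md2. By contrast, the degree restriction $\deg_G(v)\in\{0,1,n-2,n-1\}$ in the first paragraph is the structural crux, and everything downstream is essentially bookkeeping; one could alternatively finish the two cases involving $K_{n-1}$ via the facts recalled in Section \ref{sec:ihara-prop} that $Z_G$ detects regularity and the spectrum of a regular graph and that complete graphs are determined by their spectra, at the cost of a separate and slightly more delicate argument for the case involving $K_{n-1}-e$.
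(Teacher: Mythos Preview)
Your proof is correct and reaches the same three exceptional pairs $(G,\bar G)$ as the paper, but by a genuinely different route. The paper first isolates the case where $G$ is a forest in a separate lemma (distinguishing the three star-like exceptions via edge counts and the number of edges in $(\bar G)^\pr$, and appealing to the computational table for small $n$), and then, in the non-forest case, analyses the entire set $W=V\setminus V^\pr$ of dangling vertices at once, producing explicit triangles or $4$-cycles in $\bar G$ through any $w\in W$. Your argument is unified: you fix a single bad vertex $v$, invoke the clean observation that $v\notin G^\pr$ forces the neighbours of $v$ to lie in distinct components of $G-v$, combine this with the fact that the complement of a disconnected graph is connected to get $\deg_G(v)\in\{0,1,n-2,n-1\}$, and read off $G$ directly. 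For the reconstruction you use the triangle count of $H$ (which $Z_H$ gives) together with the identity $t_0=\binom{n}{3}-\bar m(n-2)+\sum_x\binom{\deg_{\bar H}(x)}{2}$ to recover $\sum_x\deg_{\bar H}(x)^2$, and then an extremal/unique-realisation argument on forests.

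What each buys: your approach to identifying the exceptional graphs is shorter and more conceptual (no separate forest lemma, no ad hoc cycle-hunting), while the paper's reconstruction step is quicker once one has the $(\bar G)^\pr$ viewpoint and is willing to cite the small-$n$ table. Your reconstruction is more self-contained but needs the majorization/realisation checks you flag as the ``main obstacle''; these do go through as you outline (for the third case one indeed gets the unique tree degree sequence $(n-2,2,1,\dots,1)$ from the two moment equations, and its realisation is unique). Note also that your argument, like the paper's, yields the side remark that for $n\ge 5$ the only pairs $(G,\bar G)$ for which (ii) fails are exactly these three.
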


We remark this is not true if (i) is just replaced with ``$G$ is determined by
 $Z_G$ and $Z_{\bar G}$.''   For instance, let $G_1 = K_3 \sqcup \{ v \}$
and $G_2$ be the graph obtained from $G_1$ by adding an edge from $v$ to
one vertex in $K_3$. Then $\bar G_1$ is a tree and $\bar G_2$ is a forest,
so $Z_{G_1} = Z_{G_2} = Z_{K_3}$ and $Z_{\bar G_1} = Z_{\bar G_2} = 1$.
Thus we cannot distinguish $G_1$ and $G_2$ by looking at their zeta functions
and their complements' zeta functions.  Moreover, $v$ does not lie in $G_i^\pr$ or
$(\bar G_i)^\pr$ for $i = 1, 2$.

Before the proof, we give some numerical evidence that (M2)---using
$\phi_T$ and $\phi_{\bar T}$ (or $Z_G$, $Z_{\bar G}$ and $m$)
to distinguish graphs of order $n$---is much better than just using $Z_G$
and $Z_{\bar G}$, or just $Z_G$ or $\phi_T$.  This data is presented in
Table \ref{table1}.   The second column is the total number
 of graphs on $n$ nodes.  Subsequent columns contain the number of graphs $G$ on $n$
 nodes which are not determined respectively by $Z_G$, by $Z_G$ and $Z_{\bar G}$, by $\phi_T$,
and by both $\phi_T$ and $\phi_{\bar T}$.

\begin{table}[!h]
\small
\caption{Number of small graphs not distinguished by zeta invariants}
\centering
\begin{tabular}{r|r|r|r|r|r}
$n$  & \# graphs & $Z$ & $Z\bar Z$ & $T$ & $T \bar T$\\
\hline
2 &     2  & 2 & 2&  0 &    0 \\
3  &    4   & 3 & 2&0  &   0\\
4  &   11   & 8 & 4&4   &  0\\
5   &  34   & 23 & 8&15   &  0\\
6   & 156   & 94 & 22&75   &  0\\
7  & 1,044   & 534 & 68 &449  &   0\\
8 & 12,346  & 4,889 & 312 &4,297 &    0\\
9 & 274,668 & 76,807 & 350 &68,708  &   2
\end{tabular}
\label{table1}
\end{table}

The pair of graphs $(G_1, G_2)$ on 9 vertices with the same $\phi_T$ and $\phi_{\bar T}$
are the graphs with 18 edges pictured in Example \ref{ex:crabsquid}.  We remark that
$G_1 \simeq \bar G_2$ in this example.

One explanation for why (M2) is so effective is that in most cases it restricts the
problem of distinguishing arbitrary graphs with zeta functions to looking
just at connected md2 graphs, where zeta functions give us a lot of information
(cf.\ Section \ref{sec:ihara-prop}).
Similarly we can determine a lot about $G$ if $\bar G$ is connected md2.
And if neither $G$ nor $\bar G$ is connected md2, this places strong constraints
on $G$, as we will see in the proof.

We first treat a special case, where we
can say something stronger.

\begin{lem} Suppose $G$ is a forest of order $n$.  Then at least one
of the following holds:

(i) $\bar G$ is connected md2, or

(ii) $\phi_T$ and $\phi_{\bar T}$ determine $G$ among all graphs of order $n$.
\end{lem}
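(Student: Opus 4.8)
The plan is to analyze when a forest $G$ of order $n$ fails to have $\bar G$ connected md2, and show that in each such case $\phi_T$ and $\phi_{\bar T}$ pin down $G$. First I would recall that for a forest, $Z_G = 1$, so $\phi_T(\lambda) = \lambda^{2m}$ and thus $\phi_T$ carries exactly one piece of information about $G$: the number of edges $m = n - c$, where $c$ is the number of connected components. Since we are also given $n$, knowing $\phi_T$ is the same as knowing $c$. So the whole burden falls on $\phi_{\bar T}$, equivalently on $Z_{\bar G}$ together with the size of $\bar G$ (which is $\binom{n}{2} - m$, already known). The task reduces to: if $\bar G$ is not connected md2, does $Z_{\bar G}$ (plus $n$ and $c$) determine the forest $G$?

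Next I would dispose of the small cases and the degenerate structural possibilities for why $\bar G$ fails to be connected md2. For $n \le 3$ one checks forests directly. For $n \ge 4$, the complement $\bar G$ of a forest is almost always connected with high minimum degree: a vertex $v$ has degree $n - 1 - \deg_G(v)$ in $\bar G$, and since $G$ is a forest its average degree is less than $2$, so most vertices of $\bar G$ have large degree. I would argue that for $n \ge 4$ the complement $\bar G$ is disconnected only in very restricted situations — essentially when $G$ contains a spanning complete bipartite-type obstruction, which for a forest forces $G$ to be a star $K_{1,n-1}$ (whose complement is $K_{n-1} \sqcup K_1$) or a near-star; and $\bar G$ fails to be md2 (has a vertex of degree $\le 1$ in $\bar G$) only when $G$ has a vertex of degree $\ge n-2$, again forcing $G$ close to a star. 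In each of these finitely many structural families — $G = K_{1,n-1}$, $G = K_{1,n-2} \sqcup K_1$, $G = $ a star with one edge subdivided, and the like — I would simply exhibit that $Z_{\bar G}$ together with $n$ (and $c$, hence $m$) already separates them from all other graphs of order $n$: for these $G$, the complement $\bar G$ is either a small perturbation of a complete graph or a complete graph with a pendant structure, whose pruned graph $(\bar G)^\pr$ is connected md2, so by the results of Section~\ref{sec:ihara-prop} we recover $n^\pr, m^\pr, \kappa$, bipartiteness, etc., for $(\bar G)^\pr$, and combined with $n$ and $m$ this reconstructs $\bar G$ and hence $G$.

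The main obstacle I anticipate is the bookkeeping in the step "$\bar G$ not connected md2 $\Rightarrow$ $G$ lies in a short explicit list of forests." One must carefully translate the two failure modes — $\bar G$ disconnected, and $\bar G$ having a vertex of $\bar G$-degree $\le 1$ — into constraints on the degree sequence and edge structure of the forest $G$, and verify no sporadic forests slip through for small $n$. Disconnectedness of $\bar G$ means $V$ splits as $V_1 \sqcup V_2$ with all edges between $V_1$ and $V_2$ present in $G$; since $G$ is acyclic this forces $|V_1| \cdot |V_2| \le n - 1$, hence $\min(|V_1|,|V_2|) = 1$, i.e. $G$ has a dominating vertex, so $G^\pr$-analysis applies. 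A $\bar G$-degree-$\le 1$ vertex means some $v$ has $\deg_G(v) \ge n - 2$; for a forest this again means $v$ is adjacent to all or all-but-one other vertices, and the remaining edge (if any) is forced to be incident to a leaf, giving at most a couple of isomorphism types per $n$. Once the list is in hand, the verification that $Z_{\bar G}$ separates these from everything else is routine, using that $\bar G$ or $(\bar G)^\pr$ is then a dense connected md2 graph whose zeta function determines $n^\pr$, $m^\pr$ and enough further invariants (e.g.\ $\kappa$, the triangle count, bipartiteness) to fix it up to isomorphism given $n$.
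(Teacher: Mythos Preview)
Your proposal is correct and follows essentially the same route as the paper. Both arguments observe that $\bar G$ fails to be connected md2 precisely when $G$ has a vertex of degree $\ge n-2$ (your disconnectedness case also reduces to this, since a dominating vertex has degree $n-1$), and for a forest this forces $G$ to be one of exactly three graphs: the star $K_{1,n-1}$, the star with one arm extended by a single edge, or $K_{1,n-2}\sqcup K_1$. The paper then distinguishes these three by the edge count $m$ (separating the third from the first two) and by the number of edges in $(\bar G)^\pr$ (separating the first two), and rules out non-forests via $Z_G=1$; your sketch invokes the same invariants $n^\pr, m^\pr$ of $(\bar G)^\pr$, which is exactly what is needed, though the additional invariants you mention ($\kappa$, triangle count, bipartiteness) turn out to be unnecessary.
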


\begin{proof}
 By Table \ref{table1}, we see
$\phi_T$ and $\phi_{\bar T}$ determine $G$ for at least $n \le 8$.
Assume $n \ge 5$.  Then $G$ has at most 1 vertex of degree $\ge n-2$.  If it has
none, $\bar G$ is connected md2, so assume it does.  Then $G$ must be
one of the following three graphs: a star graph on $n$ nodes, a star-like graph
on $n$ nodes with exactly 1 node distance 2 from the hub (so the other $n-2$ non-hub nodes are
adjacent to the hub), or a star graph on $n-1$
nodes union a point.  The latter case can be distinguished from the previous two
by counting edges.  The former two can be distinguished by looking at the number
of edges in $(\bar G)^\pr$.  Since $Z_G$ detects forests, $\phi_T$ and $\phi_{\bar T}$
determine these 3 graphs among all graphs of order $n$.
\end{proof}

\begin{proof}[Proof of Proposition \ref{prop:M2}]
If $G$ or $\bar G$ is md2, then clearly $(ii)$ holds, so we may assume neither is md2.  Switching $G$ and $\bar G$ if necessary, we may also assume $G$ is connected.  In light of the lemma, we may further assume $G^\pr = (V^\pr, E^\pr)$ is not the null graph, and hence
 $G^\pr$ has order $n^\pr \geq 3$.

Let $H = (W, F)$ be the subgraph (which is a forest) of $G$ induced from $W=V-V^\pr$.  Note that, in $G$, no $w \in W$ can be adjacent to more than 1 vertex in $V^\pr$.  Also, in any connected component of $H$, there is exactly one vertex which is adjacent to a vertex in $V^\pr$.  Suppose that $H$ contains at least two vertices $w_1, w_2$.  If $w_1$ and $w_2$ are not adjacent in $G$ then there exists $v\in V^\pr$ such that neither $w_1$ nor $w_2$ is adjacent to $v$ in $G$ and hence $\bar G$ contains the triangle $w_1, w_2, v$.  If $w_1$ and $w_2$ are adjacent in $G$, then at most one of them is adjacent to a vertex in $V^\pr$ so there exist $v_1, v_2 \in V^\pr$ which are not adjacent to either $w_1$ or $w_2$.  Then $\bar G$ contains the 4-cycle $\{w_1,w_2\}\vee \{v_1,v_2\}$.  In either case, both $w_1$ and $w_2$ appear in $(\bar G)^\pr$.

It remains to consider the case where $|W|=1$.  Let $w$ be the unique vertex in $W$ and let $v$ be the unique vertex in $V^\pr$ which is adjacent to $w$ in $G$.  If any two vertices $v_1, v_2$ in $V^\pr -\{ v\}$ are not adjacent, then $w, v_1, v_2$ forms a triangle in $\bar G$.  Also, if there exists $v_1,v_2\in V^\pr$ which are not adjacent to $v$ then $\{w,v\}\vee \{v_1,v_2\}$ is a 4-cycle in $\bar G$.  In either case, $w$ appears in $(\bar G)^\pr$.  We may therefore assume the graph induced by $V^\pr -\{ v\}$ is a complete graph and there is a most 1 vertex in $V^\pr - \{v\}$ which does not connect to $v$.  This leaves us with 2 possibilities for $G$, but we see that $\bar G$ for these two possibilities are a star-like graph
on $n$ nodes with exactly 1 node distance 2 from the hub or a star graph on $n-1$ nodes union a point.  In the proof of the last lemma, we showed that these two graphs are distinguished by $\phi_T$ and $\phi_{\bar T}$ among all graphs of order $n$.
\end{proof}

Note that the proofs of the lemma and proposition show that for $n\geq 5$, there are exactly three pairs of graphs $(G, \bar G)$ of order $n$ for which it is not true that every vertex $v$ of $G$ is in $G^\pr$ or ${\bar G}^\pr$.  These are the following three graphs and their complements: the star graph on $n$ nodes, the star-like graph on $n$ nodes with exactly 1 node distance 2 from the hub, and the star graph on $n-1$ nodes union a point.  Hence in all but these three cases, every vertex of $G$ appears in a geodesic of $G$ or $\bar G$ (or both).

\section{The Bartholdi zeta function} \label{sec:bartholdi}

An alternative to using zeta functions of graphs related to $G$ in order to study $G$ is
to consider a more general notion of zeta function which involves dangling nodes and links.
If we think about the adjacency spectrum, or equivalently the closed walk spectrum, it can
distinguish things like path graphs from star graphs because backtracking is allowed in closed
walks.  On the other hand, the closed walk spectrum loses a lot of information contained in the geodesic
length spectrum.  Bartholdi \cite{bartholdi} introduced a more general zeta function which encodes
 both the closed walk spectrum and the geodesic length spectrum.

Let $\gamma = (e_1, \ldots, e_\ell)$ be a closed walk of length $\ell = \ell(\gamma)$.  The number of backtracks in $\gamma$
is the number of $1 \le i < \ell$ such that $e_{i+1} = e_i^{-1}$.  We say $\gamma$ has a tail if $e_\ell = e_1^{-1}$.
The cyclic bump count $\cbc(\gamma)$ is the number of backtracks in $\gamma$ plus 1 or 0, according to whether
$\gamma$ has a tail or not.  The cyclic permutation group $\langle \sigma \rangle$ defined in Section \ref{sec:ihara} acts on closed walks of
length $\ell$ and preserves the cyclic bump count.  A closed walk is primitive if it is not of the form $k\gamma$ for $k > 1$.
Let $a(\ell; c) = a_G(\ell;c)$ denote the number of $\langle \sigma \rangle$ orbits of primitive closed walks in
$G$ with $\ell(\gamma) = \ell$ and $\cbc(\gamma) = c$.  Note $a(\ell; 0) = a(\ell)$ since $\cbc(\gamma) = 0$ means
$\gamma$ is a geodesic.
The {\em Bartholdi zeta function} is
\begin{equation} \label{eq:barth-def}
 \barth_G(t, u) = \prod_\gamma (1-u^{\cbc(\gamma)} t^{\ell(\gamma)} )^{-1} = \prod_{c, \ell} (1 - u^c t^\ell)^{-a(\ell; c)}
 = \exp \left( \sum_{c, \ell} a(\ell; c) \sum_{k\geq 1} \frac{u^{ck} t^{\ell k}}k \right),
\end{equation}
where $\gamma$ runs over $\langle \sigma \rangle$ equivalence classes of primitive closed walks in
$G$.   Note this gives the Ihara zeta function when $u=0$: $\barth_G(t,0) = Z_G(t)$.

Bartholdi \cite{bartholdi} proved an analogue of the Bass determinant formula:
\begin{equation} \label{eq:barth-bass}
\barth_G(t,u) = (1 - (1-u)^2t^2)^{n-m} \det( I - tA + (1-u)(D - (1-u)I)t^2)^{-1}.
\end{equation}

\subsection{Properties determined by the Bartholdi zeta function---(M3)}

First observe \eqref{eq:barth-def} tells us
that knowing $\barth_G$ is equivalent to knowing all of the numbers $a(\ell; c)$.
Since $a(2; 1) = 2m$, $\barth_G$ determines $m$.  However, $\barth_G$ does not determine $n$
as adding isolated vertices does not change $\barth_G$.
We see $\mathcal Z_G$ determines the
number of 3-, 4-, and 5-cycles in $G$ since $Z_G$ does.  However it does not determine the
number of 6-cycles, as the pair of graphs in Example \ref{ex:crabsquid} have the same Bartholdi
zeta function.

\cite{KL} observed that one can rewrite \eqref{eq:barth-bass}
in terms of the generalized characteristic polynomial $\phi_{AD}^G(\lambda, x) = \det(\lambda I - A + xD)$ by
\begin{equation} \label{eq:phiAD}
\barth_G(t,u) = (1 - (1-u)^2t^2)^{n-m} t^{-n} \phi_{AD}^G( t^{-1} - (1-u)^2t, (1-u)t )^{-1},
\end{equation}
We will write $\phi_{AD} = \phi_{AD}^G$ if the graph $G$ is clear from context.
It is stated in \cite{WLLX} that $\barth_G(t,u)$ determines $\phi_{AD}$ and vice versa,
but this is not true without further qualification in the same way that $Z_G$ does not determine
$\phi_T$.
Namely, $\phi_{AD}$ determines $m$ and $n$, so also determines $\barth_G$.
On the other hand, $\barth_G$ does not determine $n$.  However, since $\barth_G$
determines $m$
we see that $\barth_G$ and $n$ determine $\phi_{AD}$ and vice versa.

From now on, we now consider (M3):  what can be determined from $n$ and $\barth_G$,
or equivalently, $\phi_{AD}$?

By specializing $\phi_{AD}(\lambda, x)$ to $x = 0, \pm 1$,
we see $\phi_{AD}$ determines the spectra of $A$, $L$ and
$|L|$.  There is much literature about what these spectra individually determine about $G$,
and various families of graphs that are determined by such spectra
(e.g., see the books \cite{BH}, \cite{CRS} and the survey articles \cite{vDH1}, \cite{vDH2}).
We just recall a few things determined by knowing all of these spectra: the number of edges,
regularity,
the number of components, the number of bipartite components, the complexity and the closed
walk spectrum.

From $Z_G$, we also know whether $G$ is connected md2, and all of the things
discussed in Section \ref{sec:ihara-prop}.
In addition, \cite{WLLX} proves that $\phi_{AD}$ determines the degree sequence of $G$.

\subsection{Bartholdi zeta functions of complements---(M4)} \label{sec:M4}

Finally, consider our last method (M4): what can be determined from $n$, $\barth_G$
and $\barth_{\bar G}$.

Equation \eqref{eq:phiAD} tells us
 these quantities determine the spectra of $A+xD$ and $A+xD-J$ for all $x$.  When $x=0$,
\cite{JN} showed this determines the spectra of $A + yJ$ for all $y$, and the proof
(see \cite{vDH1} for a simpler proof) in fact works for $x \ne 0$,
i.e., $n$, $\barth_G$ and $\barth_{\bar G}$ determine the generalized characteristic
polynomial $\phi_{ADJ}(\lambda, x, y) = \phi_{ADJ}^G(\lambda,x,y) = \det(\lambda I - A + xD +y J)$.  The converse,
that $\phi_{ADJ}$ determines $n$, $\barth_G$ and $\barth_{\bar G}$, is straightforward.

Consequently, (M4) determines everything (M2) and (M3) do.  In fact, we show
below that (M4) determines everything (M1) does.
\begin{lem}
Let $X$ and $Y$ be $n\times n$ and $m\times m$ matrices respectively and let $J_{nm}$ denote the $n\times m$ all ones matrix.  Let $M$ be the block matrix $M=\bmx X & J_{nm} \\ J_{nm} & Y \emx$.  We will write $J$ for the square all ones matrix when the order is clear from context.    The
following are true:
 \begin{enumerate}
 \item
 The spectra of $M$ and $J-M$ are determined by the spectra of $X, J-X, Y$, and $J-Y$.
 \item
 The spectra of $X$ and $J-X$ are determined by the spectra of $M$, $J-M$, $Y$, and $J-Y$.
 \end{enumerate}
\end{lem}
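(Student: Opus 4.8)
The natural approach is to introduce a parameter and work with the one-parameter family of matrices $M(s) = \begin{bmatrix} X & sJ_{nm} \\ sJ_{nm}^{\mathrm t} & Y\end{bmatrix}$ for $s \in \mathbb{R}$, so that $M(1) = M$ and $M(0) = X \oplus Y$; similarly $J - M(s)$ interpolates between $J - X \oplus J - Y$ and $J - M$. The key observation is that $J_{nm} J_{nm}^{\mathrm t} = m J_n$ and $J_{nm}^{\mathrm t} J_{nm} = n J_m$, so conjugating $M(s)$ by a suitable block-diagonal orthogonal matrix that fixes the all-ones vectors and acts arbitrarily on their orthogonal complements shows that the off-diagonal blocks only interact with the rank-one pieces $J_n$, $J_m$. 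More precisely, I would use the standard trick (the one referenced via \cite{JN}, \cite{vDH1} in the discussion of $\phi_{ADJ}$): the characteristic polynomial $\det(\lambda I - M(s))$ can be written in terms of $\det(\lambda I - X)$, $\det(\lambda I - Y)$, and the two ``$J$-deformed'' determinants $\det(\lambda I - X - yJ)$, $\det(\lambda I - Y - yJ)$ evaluated appropriately, via a Schur-complement / matrix-determinant-lemma computation. The point is that knowing the spectra of $X$ and $J - X$ is equivalent to knowing $\phi^{X}_{J}(\lambda, y) := \det(\lambda I - X + yJ)$ for all $y$ (this is exactly the $x = 0$ case of the cited result applied to $X$ as an adjacency-type matrix, or can be proved directly), and likewise for $Y$; and then $\phi^{M}_{J}(\lambda, y)$ is a product/combination of $\phi^{X}_{J}$ and $\phi^{Y}_{J}$.

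For part (1): assume the spectra of $X$, $J - X$, $Y$, $J - Y$ are known. First I upgrade this to knowing $\det(\lambda I - X + yJ_n)$ for all $y$ and $\det(\lambda I - Y + yJ_m)$ for all $y$ — this is the cited interpolation fact (a rank-one perturbation: $\det(\lambda I - X + yJ_n)$ is an affine function of... no — it is a polynomial in $y$ of degree $1$ after dividing by $\det(\lambda I - X)$, by the matrix determinant lemma $\det(\lambda I - X + y \mathbf{1}\mathbf{1}^{\mathrm t}) = \det(\lambda I - X)\bigl(1 + y\,\mathbf{1}^{\mathrm t}(\lambda I - X)^{-1}\mathbf{1}\bigr)$, so it is linear in $y$ and hence determined by its values at $y = 0, \pm 1$). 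Then I compute $\det(\lambda I - M)$ by eliminating the off-diagonal $J_{nm}$ blocks: writing $J_{nm} = \mathbf{1}_n \mathbf{1}_m^{\mathrm t}$, the matrix $\lambda I - M$ is a rank-two perturbation of $(\lambda I - X) \oplus (\lambda I - Y)$, and a Schur-complement computation expresses $\det(\lambda I - M)$ purely in terms of the quantities $\mathbf{1}_n^{\mathrm t}(\lambda I - X)^{-1}\mathbf{1}_n$, $\mathbf{1}_m^{\mathrm t}(\lambda I - Y)^{-1}\mathbf{1}_m$, $\det(\lambda I - X)$, $\det(\lambda I - Y)$ — all of which are recoverable from the $J$-deformed characteristic polynomials above. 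The same computation with $X \mapsto J_n - X$, $Y \mapsto J_m - Y$, and noting $J_{n+m} - M = \begin{bmatrix} J_n - X & 0 \\ 0 & J_m - Y\end{bmatrix} + (\text{off-diagonal } J\text{'s become } 0)$ — wait, more carefully $J_{n+m} - M$ has off-diagonal blocks $J_{nm} - J_{nm} = 0$, so in fact $J_{n+m} - M = (J_n - X) \oplus (J_m - Y)$ outright, and its spectrum is just the union. That makes the $J - M$ half of part (1) essentially trivial, which is a good sanity check.

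For part (2), the direction actually needed for Theorem \ref{thm:cone-comp} and ultimately for ``(M4) $\Rightarrow$ (M1)'': assume the spectra of $M$, $J - M$, $Y$, $J - Y$ are known; deduce the spectra of $X$ and $J - X$. Since $J_{n+m} - M = (J_n - X)\oplus(J_m - Y)$, knowing its spectrum and the spectrum of $J_m - Y$ immediately gives the spectrum of $J_n - X$ (multiset subtraction). For the spectrum of $X$ itself: invert the Schur-complement identity from part (1). We know $\det(\lambda I - M)$, $\det(\lambda I - Y)$, and — via the now-available spectra of $J_n - X$ and $J_m - Y$ plus $Y$ — the deformed polynomial $\det(\lambda I - Y + yJ_m)$; the identity then becomes a linear (in the unknown quantity $\mathbf{1}_n^{\mathrm t}(\lambda I - X)^{-1}\mathbf{1}_n \cdot \det(\lambda I - X)$, i.e.\ essentially $\frac{d}{dy}\big|_0 \det(\lambda I - X + yJ_n)$) relation that also involves $\det(\lambda I - X)$ itself; combined with $\det(\lambda I - X + J_n) = \det(\lambda I - X) - \bigl(\det(\lambda I - X) - \det(\lambda I - X + J_n)\bigr)$... the cleanest formulation: we get two independent linear equations (from the identity for $\det(\lambda I - M)$ and the trivial one relating $X$ and $J_n - X$ through $\det(\lambda I - X + J_n) = \det\bigl(\lambda I - (J_n - X) + \text{(something)}\bigr)$ — no, better: from $J_n - X$ known we directly know $\det(\lambda I - X + J_n)$, giving one equation; the $M$-identity gives another) in the two unknowns $\det(\lambda I - X)$ and $\det(\lambda I - X + J_n)$ — wait, that overdetermines. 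Let me state it as: the $M$-identity expresses $\det(\lambda I - M)$ as an explicit function of $p(\lambda) := \det(\lambda I - X)$, $q(\lambda):=\det(\lambda I - X+J_n)$ (equivalently $p$ and $\mathbf{1}^{\mathrm t}(\lambda I-X)^{-1}\mathbf{1}$), and the known $Y$-data; we also separately know $q(\lambda)$ because $J_n - X$ is known and $\det(\lambda I - X + J_n)$ is an entire function determined by the spectrum of $X - J_n$ up to sign, which equals... hmm, $\det(\lambda I - X + J_n) = \det(\lambda I + (J_n - X) - J_n + J_n)$, not obviously the char.\ poly of $\pm(J_n - X)$. So I will instead extract $p$ and $q$ from the single $M$-identity by using two values of $\lambda$-independent... no: the honest route is that the $M$-identity, being one polynomial identity in $\lambda$, determines the pair $(p, q)$ as long as the map $(p,q) \mapsto \det(\lambda I - M)$ is injective on the relevant space, which it is generically because $\deg p = n$, $\deg(q - p) \le n-1$ with distinct leading behaviour — I will verify injectivity by a degree/leading-coefficient argument.

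\medskip

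\noindent\textbf{Expected main obstacle.} The crux is the injectivity/invertibility in part (2): showing that the Schur-complement formula for $\det(\lambda I - M)$ can be \emph{solved} for the $X$-data, not just evaluated from it. I expect to handle this by writing $\det(\lambda I - X + yJ_n) = p(\lambda) + y\,r(\lambda)$ with $\deg p = n$ and $\deg r \le n - 1$ (matrix determinant lemma), doing likewise for $Y$, plugging into the formula, and checking that the coefficients of the resulting polynomial in $\lambda$ determine $p$ and $r$ uniquely given the known quantities — essentially a triangularity-in-degree argument. The degenerate cases where $(\lambda I - X)$ is singular (i.e.\ $\lambda \in \mathrm{spec}(X)$) require the standard ``clear denominators / argue by polynomial continuity'' cleanup, but that is routine. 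The rest — the Schur-complement identity itself and the observation $J_{n+m} - M = (J_n - X)\oplus(J_m - Y)$ — is a short computation.
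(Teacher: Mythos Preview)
Your approach is correct and genuinely different from the paper's. The paper argues entirely with traces of powers: it writes $\tr(M^k) = \tr(X^k) + \tr(Y^k) + (\text{terms in }\tr(X^iJ),\tr(Y^jJ)\text{ for }i,j\le k-2)$, then observes that the relation $\tr((J-X)^k) = (-1)^k(\tr(X^k) - k\,\tr(X^{k-1}J) + \cdots)$ lets one recursively trade between the data $\{\tr X^k, \tr(J-X)^k\}$ and the data $\{\tr X^k, \tr(X^kJ)\}$, which is exactly what is needed to go back and forth. Your route via the matrix-determinant lemma is more structural: reduce everything to the two polynomials $p_X(\lambda)=\det(\lambda I-X)$ and $r_X(\lambda)$ with $\det(\lambda I - X + yJ) = p_X + y\,r_X$, observe $J-M=(J-X)\oplus(J-Y)$, and use the Schur identity $\det(\lambda I - M) = p_Xp_Y - r_Xr_Y$. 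This is shorter for part~(1) and arguably more transparent, at the cost of needing the rank-one/rank-two determinant lemmas as input; the paper's trace recursion is more elementary but more bookkeeping.

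One suggestion: your discussion of part~(2) wanders. The clean way to finish is exactly what you almost say. From $J-M$ and $J-Y$ you recover the spectrum of $J-X$, hence $p_X + r_X$ is known; call it $s$. The Schur identity then reads
\[
p_X\,(p_Y + r_Y) \;=\; \det(\lambda I - M) \;+\; s\,r_Y,
\]
where the right side and $p_Y+r_Y$ are known, and $p_Y+r_Y=\det(\lambda I - Y + J)$ is monic of degree $m$, hence a nonzero polynomial. So $p_X$ is determined (as the polynomial quotient), and then $r_X = s - p_X$. No separate ``triangularity'' or injectivity argument is needed beyond this, and the singular-$\lambda$ worry disappears once you phrase everything as polynomial identities in $\lambda$.
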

\begin{proof}
Note that, for an $r \times r$ matrix, one can determine the spectrum of the matrix from the traces of the first $r$ powers of the matrix and, conversely, one can determine the traces of all powers from the spectrum.

We can prove using induction that $M^k$ has the form $\bmx X^k + A_k & B_k\\ C_k & Y^k+D_k\emx$ where the matrices $A_k,B_k,C_k,D_k$ are sums of
matrices which are the product of $k$ matrices coming from $\{X,Y, J_{nm}, J_{mn}\}$.  For $k\geq 2$, each product of $k$ matrices appearing in $A_k$ and $D_k$ is such that at most $k-2$ of the $k$ matrices are $X$'s or $Y$'s, and for $B_k$ and $C_k$ at most $k-1$ are $X$'s or $Y$'s.

From the equation for $M^k$, we get that $\tr(M^k) = \tr(X^k)+\tr(Y^k) +\tr(A_k)+\tr(D_k)$.  We also use the following properties:
$\tr(XJ_{nm}YJ_{mn})= (\sum_{i,j} x_{ij}) (\sum_{i,j} y_{ij}) =  \tr(XJ)\tr(YJ)$, trace is invariant under cyclic permutations of products, and the product of any two all ones matrices is a scalar multiple of an all ones matrix.  From these properties, it follows that $\tr(A_k)$ and $\tr(D_k)$ can be determined from $\tr(X^iJ)$, and $\tr(Y^iJ)$ for $i=0,1,2,..,k-2$.  This leads to a relation of the form $\tr((J-X)^k) = (-1)^k(\tr(X^k) -k\tr(X^{k-1}J) + \cdots)$, where the omitted terms are determined by $\tr(X^iJ)$ for $i=0,1,2,...,k-2$, as well as a similar relation for $Y$.

We now prove the two statements.  Note that in both cases we can determine $n$ and $m$.  Also note that $J-M$ is the block diagonal matrix with $J-X$ and $J-Y$ on the diagonals, so any two of the spectra of $J-M, J-X,$ and $J-Y$ determine the third.

First suppose we know the spectra of the matrices $X, J -X, Y$, and $J-Y$.  Then we know $\tr((J-X)^i)$ and $\tr(X^i)$ for all $i$.  From this, we can recursively determine $\tr(X^iJ)$ for all $i$, and similarly for $Y$.  We can therefore compute $\tr(M^k) = \tr(X^k)+\tr(Y^k) +\tr(A_k)+\tr(D_k)$ for all $k$ and thus determine the spectrum of $M$.

Suppose now that we know the spectra of $Y, J-Y, M, J-M$.  As mentioned above, this tells us the spectrum of $J-X$.  It remains to show that we can compute $\tr(X^k)$ for $k=1,2,...,n$.  For $k=1,2$ we can find $\tr(X)$ and $\tr(X^2)$ from $\tr(M)=\tr(X)+\tr(Y)$ and $\tr(M^2)= \tr(X^2)+\tr(Y^2)+\tr(J_{mn}J_{nm})+\tr(J_{nm}J_{mn})$.  If $k>2$ and we know $\tr(X^i)$ for $i<k$, then from $\tr((J-X)^i)$ and $\tr(X^i)$ for $i=1,2,..,k-1$ we can find $\tr(X^iJ)$ for $i=1,2,...,k-2$, and
therefore also $\tr(A_k)$ and $\tr(D_k)$.  From these, we can compute $\tr(X^k)$ from $\tr(M^k) = \tr(X^k)+\tr(Y^k) +\tr(A_k)+\tr(D_k)$.
\end{proof}

If $D$ is the degree matrix of $G$, we write $\bar D$ for the degree matrix of $\bar G$.

\begin{thm} Fix $x \in \C$.
Let $G_1$ and $G_2$ be two graphs of the same order and let $H$ be any graph.  The graphs $G_1\vee H$ and $G_2\vee H$ are cospectral with respect to $A+xD$ and $\bar A + x\bar D$ if and only if $G_1$ and $G_2$ are cospectral with respect to  $A+xD$ and $\bar A+ x\bar D$.

\label{thm:cone-comp}
\end{thm}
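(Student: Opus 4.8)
The plan is to pass to block form with respect to the vertex partition $V(G_i)\sqcup V(H)$ and reduce the whole statement to the preceding Lemma. Write $n$ for the common order of $G_1,G_2$ and $h$ for the order of $H$; if $h=0$ the claim is vacuous, so assume $h\ge 1$. Let $A_i,D_i$ be the adjacency and degree matrices of $G_i$ and $A_H,D_H$ those of $H$. In $G_i\vee H$ every vertex of $G_i$ gains $h$ neighbours and every vertex of $H$ gains $n$ neighbours, the off-diagonal adjacency blocks are all-ones, and the degree matrix is diagonal, so
\[
A(G_i\vee H)+xD(G_i\vee H)=\bmx A_i+xD_i+xhI & J \\ J & A_H+xD_H+xnI \emx =: \bmx X_i & J \\ J & Y\emx ,
\]
where the crucial point is that $Y=A_H+xD_H+xnI$ does not depend on $i$. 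Moreover the spectrum of $X_i$ is a fixed shift of the spectrum of $A_i+xD_i$ (shift by $xh$, with $x,h$ fixed), so ``$G_1,G_2$ cospectral with respect to $A+xD$'' is equivalent to ``$X_1,X_2$ cospectral.''

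Next I would record the two facts about complements. First, $\overline{G_i\vee H}=\bar G_i\sqcup\bar H$. Second, in this complement a vertex $v$ of $G_i$ has degree $(n+h-1)-(h+\deg_{G_i}v)=(n-1)-\deg_{G_i}v$, i.e.\ exactly its $\bar G_i$-degree (and similarly for vertices of $H$), so $\bar A(G_i\vee H)+x\bar D(G_i\vee H)$ is block-diagonal with blocks $\bar A_i+x\bar D_i$ and $\bar A_H+x\bar D_H$. A direct computation using $\bar A_i=J-I-A_i$, $\bar D_i=(n-1)I-D_i$ gives $J-X_i=(\bar A_i+x\bar D_i)+\bigl(1-x(n+h-1)\bigr)I$, and likewise $J-Y=(\bar A_H+x\bar D_H)+\bigl(1-x(n+h-1)\bigr)I$. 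Since $J-M_i$ (with $M_i:=\bmx X_i&J\\J&Y\emx$) is block-diagonal with blocks $J-X_i$ and $J-Y$, it is the same fixed scalar shift of $\bar A(G_i\vee H)+x\bar D(G_i\vee H)$. Thus, with $n,h,x$ fixed, ``cospectral with respect to $\bar A+x\bar D$'' translates into ``cospectral $J-X_i$'' on the $G_i$ side and ``cospectral $J-M_i$'' on the $G_i\vee H$ side.

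With this dictionary the theorem is the Lemma applied twice. For the forward direction: if $G_1,G_2$ are cospectral with respect to $A+xD$ and $\bar A+x\bar D$, then $X_1,X_2$ are cospectral and $J-X_1,J-X_2$ are cospectral; since $Y$ (hence $J-Y$) is common to both, part (1) of the Lemma gives that $M_1,M_2$ and $J-M_1,J-M_2$ are cospectral, i.e.\ $G_1\vee H$ and $G_2\vee H$ are cospectral with respect to $A+xD$ and $\bar A+x\bar D$. For the converse: given cospectrality of $M_1,M_2$ and of $J-M_1,J-M_2$, and using that $Y,J-Y$ are known and common, part (2) of the Lemma forces cospectrality of $X_1,X_2$ and of $J-X_1,J-X_2$, which is exactly cospectrality of $G_1$ and $G_2$ with respect to $A+xD$ and $\bar A+x\bar D$.

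The block and degree bookkeeping is routine; the only place demanding care is the chain of scalar-identity shifts relating ``cospectral with respect to $\bar A+x\bar D$'' to ``cospectral with respect to $J-M$'' — one must check that the constants $xh$, $xn$, and $1-x(n+h-1)$ line up and that holding $n,h,x$ fixed really lets one push these shifts back and forth (so that the hypotheses of the Lemma, which are about the spectra of $X_i,J-X_i,Y,J-Y$ and of $M_i,J-M_i$, are equivalent to the cospectrality hypotheses in the theorem). Once the problem is phrased in the block form above, no further obstacle remains.
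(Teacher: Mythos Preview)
Your proof is correct and follows essentially the same approach as the paper: write $A(G_i\vee H)+xD(G_i\vee H)$ in block form with $X_i=A_i+xD_i+xhI$ and $Y=A_H+xD_H+xnI$, observe that passing to $\bar A+x\bar D$ is a fixed scalar shift of passing to $J-(\cdot)$, and then invoke both directions of the preceding Lemma. The paper's proof is terser---it records the single identity $\bar A+x\bar D=J-(A+xD)+(x(r-1)-1)I$ for an order-$r$ graph and applies it at the three orders $n$, $h$, $n+h$---whereas you spell out the shifts $xh$, $xn$, $1-x(n+h-1)$ explicitly and verify $\overline{G_i\vee H}=\bar G_i\sqcup\bar H$ by hand; but the substance is identical.
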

\begin{proof}
Let $M=A+xD$ and note that, for an order $r$ graph, $\bar M := \bar A + x \bar D = J-M + (x(r-1)-1)I$.  Thus if we know $x$ and $r$, knowing the spectrum of $\bar M$ is equivalent to knowing the spectrum of $J-M$.  Let $n$ be the order of $G_1$ and $G_2$, let $m$ be the order of $H$, and denote their respective matrices $M=A+xD$ by $M_{G_i}$ and $M_H$.  The join $G_i\vee H$ has matrix $M_{G_i\vee H} = \bmx M_{G_i} + xm I_n & J_{nm}\\ J_{mn}  & M_H + xnI_m \emx$.  We can thus apply the previous lemma to show that if $G_1$ and $G_2$ are cospectral with respect to $M$ and $\bar M$, then so are $G_1\vee H$ and $G_2\vee H$ and vice versa.
\end{proof}

This is already well known in the case that $x=-1$, which corresponds to the
Laplacian---in fact less is needed in this case since the Laplacian spectrum of $G$ determines
that of $\bar G$.

\begin{cor} \label{cor:cone-comp}
The graphs $G_1$ and $G_2$ have the same spectra with respect to $A$ and $\bar A$
(or $|L|$ and $\overline{|L|}$),
if and only if the same is true for $G_1^*$ and $G_2^*$.  Similarly, $G_1$ and $G_2$ have the same generalized characteristic polynomial $\phi_{ADJ}$ if and only if the same is true for $G_1^*$ and $G_2^*$, i.e., $\phi_{ADJ}^{G_1} = \phi_{ADJ}^{G_2}$ if and only if $\phi_{ADJ}^{G_1^*} = \phi_{ADJ}^{G_2^*}$.
\end{cor}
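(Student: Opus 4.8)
The plan is to deduce every assertion of the corollary from Theorem~\ref{thm:cone-comp} by taking the auxiliary graph $H$ there to be the single vertex $K_1$, so that $G_i\vee H = G_i\vee K_1 = G_i^*$. Thus the whole proof amounts to feeding the right choices of $x$ (and, for the $\phi_{ADJ}$ statement, the right dictionary between polynomials and families of spectra) into that theorem.

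First I would dispatch the two spectral statements directly. Taking $x=0$ in Theorem~\ref{thm:cone-comp} gives $A+xD=A$ and $\bar A+x\bar D=\bar A$, so $G_1$ and $G_2$ are cospectral with respect to $A$ and $\bar A$ if and only if $G_1^*$ and $G_2^*$ are; taking $x=1$ gives $A+xD=A+D=|L|$ and $\bar A+x\bar D=\bar A+\bar D=\overline{|L|}$, which is the $|L|$/$\overline{|L|}$ claim verbatim. (In each case the orders of $G_1,G_2$ automatically agree, since matrices with the same spectrum have the same size.)

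For the $\phi_{ADJ}$ statement I would first record the dictionary already assembled in the text preceding the theorem: for a graph of known order $n$, knowing $\phi_{ADJ}^G$ is equivalent to knowing $\barth_G$ and $\barth_{\bar G}$ (the forward direction being the result of \cite{JN}, whose proof works for all $x$, and the converse being routine), while knowing $\barth_G$ together with $n$ is equivalent to knowing $\phi_{AD}^G(\lambda,x)=\det(\lambda I-A+xD)$ as a polynomial, hence to knowing the spectrum of $A+xD$ for every $x\in\C$; likewise $\barth_{\bar G}$ with $n$ encodes the spectrum of $\bar A+x\bar D$ for every $x$. Since $\phi_{ADJ}$ records $n$ as its degree in $\lambda$, it follows that $\phi_{ADJ}^{G_1}=\phi_{ADJ}^{G_2}$ if and only if $G_1$ and $G_2$ have the same order and are cospectral with respect to $A+xD$ and $\bar A+x\bar D$ for every $x\in\C$. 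Now applying Theorem~\ref{thm:cone-comp} once for each fixed $x$, with $H=K_1$, turns this into the statement that $G_1^*$ and $G_2^*$ are cospectral with respect to $A+xD$ and $\bar A+x\bar D$ for all $x$, and running the dictionary backwards gives $\phi_{ADJ}^{G_1^*}=\phi_{ADJ}^{G_2^*}$.

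I do not expect a real obstacle here, since all of the substance sits in Theorem~\ref{thm:cone-comp}; the only point that needs care is the bookkeeping in the last paragraph—keeping the order $n$ attached throughout, so that the passage $\bar A+x\bar D = J-(A+xD)+(x(n-1)-1)I$ (the identity used in the proof of the theorem) is genuinely invertible, and so that quantifying over all $x\in\C$ on both sides of the equivalence is legitimate. One should also remark explicitly that equality of any single spectral invariant already forces $G_1$ and $G_2$ to have equal order, so the order hypothesis in Theorem~\ref{thm:cone-comp} is met automatically in every application.
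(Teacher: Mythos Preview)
Your proposal is correct and matches the paper's approach: the corollary is stated without a separate proof precisely because it drops out of Theorem~\ref{thm:cone-comp} by specializing $H=K_1$ (so $G_i\vee H=G_i^*$), taking $x=0$ and $x=1$ for the $A/\bar A$ and $|L|/\overline{|L|}$ claims, and for the $\phi_{ADJ}$ claim using the equivalence (recorded in Section~\ref{sec:M4}) between $\phi_{ADJ}$ and the full family of $A+xD$, $\bar A+x\bar D$ spectra. Your care with the order hypothesis and the ``for all $x$'' bookkeeping is appropriate but not where the substance lies.
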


\section{Constructions} \label{sec:constructions}

Here we discuss three well-known constructions of cospectral graphs, and show that
in many cases, such graphs are distinguished by methods (M1)--(M4) using constraints on
degree distributions.  This provides some evidence for our conjectures in the next section
that most cospectral graphs can be distinguished by any of (M1)--(M4).

In the last part of this section, we give a new construction for graphs which cannot
be distinguished by (M1)--(M4), which generalizes the pair of graphs from Example \ref{ex:9verts}.

\subsection{GM switching}

Godsil and McKay present a method for constructing cospectral pairs of graphs \cite{GM}, which is now referred to as GM switching.  We say $G$ satisfies the $(k+1)$-GM (or just GM) condition if there is an ordering of the vertices such
that the adjacency matrix can be written in the form
$$ A = \bmx B_1 & B_{12} & \cdots & B_{1k} & N_1 \\ {}^t B_{12} & B_2 & \cdots & B_{2k} & N_2 \\
\vdots & &\ddots & & \vdots \\ {}^t B_{1k} & \cdots & {}^t B_{k-1,k} &  B_k & N_k \\
{}^t  N_1 & \cdots & {}^t N_{k-1} & {}^t N_k & C \emx$$
where (i) each $G_{B_i}$ is regular, (ii) each $B_{i,j}$ has constant row and column sums, and
(iii) each column of each $N_i$ has exactly 0, $b_i/2$ or $b_i$ 1's.  Here $b_i$ is the order of each square matrix $B_i$, and we assume
at least one of the $b_i$'s is even.  Note $k+1$ is the number of diagonal blocks, so the ordering of
the vertices and the size of the $B_i$'s determines a partition of the vertex set into $k+1$ subsets
$V_{B_1}, \ldots, V_{B_k}, V_C$.

Let $\~N_i$ be the matrix formed from $N_i$ by replacing each column $\mathbf{v}$ of $N_i$ which consists of $b_i/2$ ones by the column $J_{b_i,1}-\mathbf{v}$.  The GM switch of $G$ is the graph $\~G$ with adjacency matrix $\~A$ which is gotten from $A$ by replacing each $N_i$ with $\~N_i$.  Then $G$ and $\~G$ are cospectral with respect to the adjacency matrix.  The proof is to exhibit a matrix $Q$ which conjugates $A$ to $\~A$ and has constant row and column sums of 1, and hence commutes with the all ones matrix $J$.  It follows that the complements of $G$ and $\~G$ are also cospectral with respect to the adjacency matrix.  Also, by Corollary \ref{cor:cone-comp}, the cones of $G$ and $\~G$ are cospectral with respect to the adjacency matrix.

Haemers and Spence introduced a special case of GM switching \cite{HS}, which they called GM* switching, which gives cospectral pairs with respect to any matrix of the form $A+xD$.  They only define GM* switching in the case where $k=1$, but it works in the more general setup as well.  We say $G$ satisfies the $(k+1)$-GM* condition if $G$ satisfies the $(k+1)$-GM condition with the additional requirement that each vertex in $G_{B_i}$ has the same degree in $G$ (i.e., each $N_i$ has constant row sums).  This condition guarantees that $G$ and $\~G$ have the same degree matrix $D$. Also $D$ and $J$ commute with $Q$, the matrix which conjugates $A$ to $\~A$, and hence $A+xD+yJ$ and $\~A+xD+yJ$ are cospectral for any $x,y$ so $G$ and $\~G$ have the same generalized characteristic polynomial $\phi_{ADJ}$.  Also, by Corollary \ref{cor:cone-comp}, the cones of $G$ and $\~G$ have the same $\phi_{ADJ}$.  We call this $(k+1)$-GM* switching and say that
$(G, \~G)$ is a $(k+1)$-GM* pair provided $G \not \simeq \~G$.

In the next section, we will find that GM* switching accounts for a significant percentage of graphs up to 11 vertices which have the same $\phi_{ADJ}$
(see Table \ref{tab:main}).  For the graphs on up to 10 vertices, 3-GM* switching does
not produce any such examples that 2-GM* does not.
However for 11 vertices, there are 108 3-GM* pairs which cannot be obtained by 2-GM* switching,
even if one allows successive 2-GM* switching.  (It happens that $(G_1, G_2)$ can be a 3-GM*
pair but not a 2-GM* pair, but that $G_1$ and $G_2$ are both obtained as 2-GM* switches
from a third graph $G_3$.)

\begin{ex} \label{ex:3GMst}
The graphs {\verb+J?BD?oX[F[?+} and {\verb+J?`CP``LE{?+ } on 11 vertices and 18 edges
drawn below are a 3-GM* pair, but not a 2-GM* pair.  Further, neither of these
form a 2-GM* pair with any other graph.  The subgraphs $G_{B_1}, G_{B_2}$, and $G_C$ are the subgraphs induced by vertices 1--6, 7--9, and 10--11 respectively.
\begin{center}
\begin{tikzpicture}[scale=0.75,transform shape]
%
%
\useasboundingbox (0,0) rectangle (3.0cm,3.75cm);
\Vertex[L=\hbox{$1$},x=0.0 cm,y=0.0cm]{v0}
\Vertex[L=\hbox{$3$},x=1.25cm,y=0.0cm]{v2}
\Vertex[L=\hbox{$2$},x=0.0cm,y=1.0cm]{v1}
\Vertex[L=\hbox{$6$},x=1.25cm,y=1.0cm]{v5}
\Vertex[L=\hbox{$4$},x=1.25cm,y=2.0cm]{v3}
\Vertex[L=\hbox{$5$},x=0cm,y=2.0cm]{v4}
\Vertex[L=\hbox{$7$},x=3cm,y=0cm]{v6}
\Vertex[L=\hbox{$8$},x=3cm,y=1.0cm]{v7}
\Vertex[L=\hbox{$9$},x=3cm,y=2.0cm]{v8}
\Vertex[L=\hbox{$10$},x=0.75cm,y=3.5cm]{v9}
\Vertex[L=\hbox{$11$},x=2.25cm,y=3.5cm]{v10}
\Edge[](v0)(v2)
\Edge[](v0)(v8)
\Edge[](v0)(v10)
\Edge[](v1)(v5)
\Edge[](v1)(v8)
\Edge[](v1)(v10)
\Edge[](v2)(v6)
\Edge[](v2)(v10)
\Edge[](v3)(v4)
\Edge[](v3)(v7)
\Edge[](v3)(v9)
\Edge[](v4)(v7)
\Edge[](v4)(v9)
\Edge[](v5)(v6)
\Edge[](v5)(v9)
\Edge[](v6)(v10)
\Edge[](v7)(v10)
\Edge[](v8)(v10)
\end{tikzpicture}
\hspace{1in}
 \begin{tikzpicture}[scale=0.75,transform shape]
%
%
\useasboundingbox (0,0) rectangle (3.0cm,3.5cm);
\Vertex[L=\hbox{$1$},x=0.0 cm,y=0.0cm]{v0}
\Vertex[L=\hbox{$3$},x=1.25cm,y=0.0cm]{v2}
\Vertex[L=\hbox{$2$},x=0.0cm,y=1.0cm]{v1}
\Vertex[L=\hbox{$6$},x=1.25cm,y=1.0cm]{v5}
\Vertex[L=\hbox{$4$},x=1.25cm,y=2.0cm]{v3}
\Vertex[L=\hbox{$5$},x=0cm,y=2.0cm]{v4}
\Vertex[L=\hbox{$7$},x=3cm,y=0cm]{v6}
\Vertex[L=\hbox{$8$},x=3cm,y=1.0cm]{v7}
\Vertex[L=\hbox{$9$},x=3cm,y=2.0cm]{v8}
\Vertex[L=\hbox{$10$},x=0.75cm,y=3.5cm]{v9}
\Vertex[L=\hbox{$11$},x=2.25cm,y=3.5cm]{v10}
\Edge[](v0)(v2)
\Edge[](v0)(v8)
\Edge[](v0)(v9)
\Edge[](v1)(v5)
\Edge[](v1)(v8)
\Edge[](v1)(v9)
\Edge[](v2)(v6)
\Edge[](v2)(v9)
\Edge[](v3)(v4)
\Edge[](v3)(v7)
\Edge[](v3)(v10)
\Edge[](v4)(v7)
\Edge[](v4)(v10)
\Edge[](v5)(v6)
\Edge[](v5)(v10)
\Edge[](v6)(v10)
\Edge[](v7)(v10)
\Edge[](v8)(v10)
\end{tikzpicture}
\end{center}
\end{ex}

We now focus on the case of 2-GM switching.  For simplicity, we omit the subscripts and
write the adjacency matrix as
$$A = \bmx  B & N\\ {}^t N & C \emx. $$
For $G \not \simeq \~G$, we need $B$ to have even size $\ge 4$. Further, the larger $B$
is, the less likely it is that the GM condition will be satisfied.
So Haemers and Spence \cite[Thm 3]{HS} use GM switching with B of size 4 to get a lower
bound on the number of non-DS graphs.  The following shows that most cospectral pairs thus
constructed are distinguished by zeta functions.

\begin{thm} \label{thm:GM-zeta}
Suppose $G_B$ is regular on $4$ vertices and $G_C$ is an md1 (resp.\ md2)
graph on $n$ vertices.   Then the proportion of GM-admissible choices of $N$ such that
the pair $(G, \~G)$ of {\em labeled} graphs
formed by GM switching on $(B,C,N)$ which are distinguished by $Z^*$ (resp.\ $Z$)
goes to $1$ as $n \to \infty$.
\end{thm}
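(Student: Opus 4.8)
The plan is to reduce the statement to a fact about how GM switching changes the degree sequence, and then to an elementary estimate for a lazy random walk.

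First I would set up the reduction. Since GM switching preserves orders, sizes and adjacency spectra, and by Corollary~\ref{cor:cone-comp} the cones $G^*$ and $\~G^*$ share all the generalized spectra as well, the only leverage the zeta functions can give is the information they carry beyond the spectrum; the cleanest such datum is the leading coefficient of $Z_H(t)^{-1}$, which for an md2 graph $H$ is $\prod_v(\deg_H(v)-1)$ (Section~\ref{sec:ihara-prop}, applied componentwise if $H$ is disconnected). Concretely: if $G_C$ is md2, then outside an exceptional set of $N$ (discussed below) $G$ and $\~G$ are md2, so $Z_G=Z_{\~G}$ forces $\prod_{v\in V(G)}(\deg_G(v)-1)=\prod_{v\in V(\~G)}(\deg_{\~G}(v)-1)$; if $G_C$ is md1, then $G^*$ and $\~G^*$ are connected md2 of the common order $n+5$, and knowing this order, $Z_{G^*}=Z_{\~G^*}$ forces $\prod_{v\in V(G)}\deg_G(v)=\prod_{v\in V(\~G)}\deg_{\~G}(v)$. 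So it suffices to show these products over $V(G)$ differ for a proportion of GM-admissible $N$ tending to $1$. The exceptional sets where $G$ or $\~G$ fails to be md2 (resp.\ md1) arise only when $G_B$ has low degree and some row sum of $N$ (resp.\ of $\~N$) is small; under the uniform model on the $8^n$ admissible $N$ each such event has probability $O(n2^{-n})$, so these sets are negligible.

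Next I would compute the effect of switching on these products. Every vertex of $G_C$ keeps its degree, since switching preserves the column sums of $N$, so only the four vertices of $G_B$ matter. Write $s_i$ for the $i$-th row sum of $N$, $\tilde s_i$ for that of $\~N$, $p_{ab}$ for the number of columns of $N$ equal to the indicator of the $2$-set $\{a,b\}$, $q$ for the number of all-ones columns, and $P=\sum_{a<b}p_{ab}$. Then $s_i=q+\sum_{j\ne i}p_{ij}$ and, because switching carries a column of type $\{a,b\}$ to one of type $\{a,b\}^{c}$, $\tilde s_i=2q+P-s_i$; in particular $\sum_i s_i=\sum_i\tilde s_i$. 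If $G_B$ is $r$-regular, then $\deg_G(v_i)=r+s_i$ and $\deg_{\~G}(v_i)=r+\tilde s_i$. A short calculation with elementary symmetric polynomials shows that for any constant $c$, $\prod_{i=1}^4(c+s_i)-\prod_{i=1}^4(c+\tilde s_i)$ equals $\tfrac16\big(\sum_i(c+s_i)\big)\sum_{i=1}^4(s_i-\bar s)^3$ with $\bar s=\tfrac14\sum_i s_i$, and the prefactor is positive in both the md1 and md2 cases; a second short computation identifies $\sum_{i=1}^4(s_i-\bar s)^3=3(p_{12}-p_{34})(p_{13}-p_{24})(p_{14}-p_{23})$. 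Hence the degree-product criterion fails precisely when $p_{12}=p_{34}$, or $p_{13}=p_{24}$, or $p_{14}=p_{23}$, i.e.\ exactly when one of the three complementary pairs of $2$-subsets of $\{1,2,3,4\}$ occurs equally often among the columns of $N$ (the fully balanced case being exactly GM* switching). In particular $c$ drops out, so the md1 ($c=r$) and md2 ($c=r-1$) cases are handled simultaneously: whenever all three inequalities hold, $Z_G\ne Z_{\~G}$ (resp.\ $Z_{G^*}\ne Z_{\~G^*}$).

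It then remains to bound the proportion of GM-admissible $N$ with $p_{12}=p_{34}$, the other two cases being identical by symmetry. Modelling $N$ as $n$ i.i.d.\ columns uniform over the eight admissible types, $p_{12}+p_{34}$ is $\mathrm{Binomial}(n,\tfrac14)$ and, conditionally on $p_{12}+p_{34}=\ell$, $p_{12}$ is $\mathrm{Binomial}(\ell,\tfrac12)$; hence $\mathbb P(p_{12}=p_{34}\mid\ell)=\binom{\ell}{\ell/2}2^{-\ell}=O(\ell^{-1/2})$ for $\ell$ even and $0$ otherwise. Since $\ell=\Theta(n)$ with all but exponentially small probability, $\mathbb P(p_{12}=p_{34})=O(n^{-1/2})$, so by the union bound the proportion of $N$ on which the degree-product criterion fails is $O(n^{-1/2})$; adding the $O(n2^{-n})$ exceptional set from the first step, the proportion of GM-admissible $N$ for which the labeled pair $(G,\~G)$ is distinguished by $Z$ (resp.\ $Z^*$) is $1-O(n^{-1/2})\to 1$. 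I expect the only genuine subtlety to be the second step: collapsing the whole comparison to the single polynomial identity $\sum_i(s_i-\bar s)^3=3(p_{12}-p_{34})(p_{13}-p_{24})(p_{14}-p_{23})$, which is what makes the failure locus both an honest algebraic condition and one with a transparent probabilistic meaning. The spectral input is quoted from Section~\ref{sec:ihara-prop} and Corollary~\ref{cor:cone-comp}, the final bound is a routine local-limit-type estimate for the lazy random walk $p_{12}-p_{34}$, and the low-degree edge cases for $G_B$ (empty or $1$-regular) need a sentence but contribute nothing to the limit.
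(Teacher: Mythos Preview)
Your proof is correct and takes a genuinely different route from the paper's. Both arguments reduce to the same degree-product criterion and then show it fails on a negligible set of $N$, but the analyses diverge from there. The paper does not solve the resulting equation: writing $x_i$ for the row sums of the half-filled submatrix $N_0$ of $N$ and using $x_4=2n_0-x_1-x_2-x_3$, it conditions on the first two rows of $N_0$, observes that $\prod_i(k+x_i)=\prod_i(k+n_0-x_i)$ is quadratic in $x_3$ with at most two roots, and bounds the probability that $x_3$ hits one of them by the central-binomial estimate $\binom{r}{\lceil r/2\rceil}2^{-r}$, where $r$ counts the columns of $N_0$ with exactly one $1$ among the first two entries; a law-of-large-numbers step then shows $r$ is typically of order $n$. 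You instead factor the difference of products exactly: your identity $\sum_i(s_i-\bar s)^3=3(p_{12}-p_{34})(p_{13}-p_{24})(p_{14}-p_{23})$ pins the failure locus down as the union of the three hyperplanes $p_{12}=p_{34}$, $p_{13}=p_{24}$, $p_{14}=p_{23}$ (whose triple intersection is precisely the GM* condition), and each has probability $O(n^{-1/2})$ by a lazy-random-walk return estimate. Your approach is cleaner and yields structural information the paper does not extract---in particular it makes the connection to GM* switching transparent---while the paper's approach does not rely on the special algebra of four variables and would adapt more readily to $G_B$ of even order greater than $4$.
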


\begin{proof} For each column of $N$, there are $2 + \binom{4}{2} = 8$ possible choices, which we
count with equal probability.  The probability that at least 2 columns are all ones is
\[ 1 - \left[\binom{n}{0} \left(\frac 18\right)^0 \left(\frac 78\right)^n +  \binom{n}{1} \left(\frac 18\right)^1 \left(\frac 78\right)^{n-1}\right] = 1 - \left(\frac{7+n}8 \right)\left( \frac 7 8 \right)^{n-1}. \]
When this is satisfied, $G$ and $\~G$ are md1 if $G_C$ is md1 and md2 if $G_C$ is md2.
Since this probability tends to 1 as $n \to \infty$,
from now on, we will assume $G$ and $\~G$ are md1 in the case of $Z^*$ or md2 in the case of
$Z$.

If $G$ and $\~G$ are not distinguished by $Z^*$ (resp.\ $Z$), then the products of the vertex degrees (resp.\ degrees minus 1)
must be the same.  Since all vertices coming from $G_C$ have the same degree, this is equivalent to knowing that the products
of the degrees of vertices (resp.\ degrees minus 1) from $B$ are the same.  Let $k$ be the degree of a vertex in $G_B$ plus the number
of all ones columns in $N$ in the case of $Z^*$, or this number minus 1 in the case of $Z$.  Let $N_0$ be the submatrix of $N$ formed by removing the columns consisting of all zeroes or all ones.  Let $x_i$ denote the number of 1's in the $i$-th row of $N_0$.  Then $G$ and $\~G$
being distinguished by $Z^*$ or $Z$ implies
\begin{equation} \label{GM-degcond}
 \prod_{i=1}^4 (k+x_i) = \prod_{i=1}^4 (k+n_0 - x_i),
\end{equation}
where $n_0$ is the number of columns in $N_0$.

View $n_0$ as fixed for now.  Since $x_4 = 2n_0 - x_1 - x_2 - x_3$, for fixed $x_1, x_2$, the solutions to \eqref{GM-degcond} in $x_3$
are the solutions to a degree 2 polynomial in $x_3$, of which there are at most 2.  Now view the top 2 rows of $N_0$ as fixed.
Let $r$ be the number of columns in $N_0$ which have exactly one 1 in the first 2 entries. Then there are $r+1$ choices for $x_3$.  There are $2^r$ choices for row 3 of $N_0$, hence for an integer $0\leq y \leq r$, the probability that $x_3=y$ is $\binom{r}{y} \frac 1{2^r}\leq \binom{r}{\lceil r/2 \rceil}\frac 1{2^r}$.
Given $r \ge 3$, the probability that $x_3$ avoids solving \eqref{GM-degcond} is at least
\[    1 - 2 \binom{r}{\lceil r/2 \rceil} \frac 1{2^r} \ge 1 - \frac{2}{\sqrt{3 {\lceil r/2 \rceil} + 1}} \ge 1 - \sqrt{\frac 8{3r}}. \]
(Here we use the inequality $\binom{2n}{n} \le \frac{2^{2n}}{\sqrt{3n+1}}$, which implies
$\binom{r}{\lceil r/2 \rceil} \le \frac{2^r}{\sqrt{3 {\lceil r/2 \rceil} + 1}}$.)
Also note
\[ P( width(N_0) = n_0) = \binom{n}{n_0} \left(\frac 34\right)^{n_0} \left(\frac 14\right)^{n-n_0} \]
and for a fixed $n_0$ the probability that the number of columns in $N_0$ which have exactly one 1 in the first 2 entries is $r$ is
\[P(r|n_0) = \binom{n_0}{r} \left(\frac 23\right)^{r} \left(\frac 13\right)^{n_0-r}. \]

Fix $0 < \delta < 1$.  By the law of large numbers, for any $\epsilon>0$, the probability that $r\ge n_0 ( \frac 23 -\epsilon)$ goes to 1 as $n_0 \to \infty$.  Hence the probability that $r \ge n_0^\delta$ goes to 1 as $n_0 \to \infty$. Similarly, the probability that $n_0 \ge n^\delta$ goes
to 1 as $n_0 \to \infty$.   So the probability that $x_3$ avoids the solutions of \eqref{GM-degcond} is at least
\[ P(n_0 \ge n^\delta) P(r \ge n_0^\delta) \left(1 - \sqrt{\frac 8{3n_0^\delta}}\right) \ge P(r \ge n^{\delta^2} ) \left(1 - \sqrt{\frac 8{3n^{\delta^2}}} \right), \]
which goes to 1 as $n \to \infty$.
\end{proof}

Let $g_n$ denote the number of simple graphs (up to isomorphism) of order $n$.

\begin{cor} The number of graphs $G$ on $n$ vertices for which there exists a cospectral nonisomorphic graph $\~G$ but
$Z_{\vphantom{\~G}G} \ne Z_{\~G}$ is at least $n^3 g_{n-1} ( \frac 1{24} - o(1) )$.  The same statement is true with $Z_{\vphantom{\~G}G} \ne Z_{\~G}$ replaced by
$Z_{\vphantom{\~G^*}G^*} \ne Z_{\~G^*}$.
\end{cor}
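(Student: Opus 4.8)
The plan is to upgrade the Haemers--Spence lower bound on the number of non-DS graphs using Theorem~\ref{thm:GM-zeta}. Recall that \cite[Thm 3]{HS} obtains its bound $\ge n^3 g_{n-1}(\tfrac{1}{24} - o(1))$ by $4$-vertex GM switching: one fixes a regular graph $G_B$ on a set $V_B$ of size $4$, lets $G_C$ range over graphs on the remaining $n-4$ vertices and $N$ over the $8^{n-4}$ GM-admissible column patterns, retains the configurations $(G_B,G_C,N)$ with $G\not\simeq\~G$, and converts the count of retained \emph{labeled} configurations into a count of isomorphism classes --- essentially by dividing by $n!$, which is legitimate because all but an $o(1)$ fraction of the retained configurations produce a graph that is automorphism-free and has a unique GM-switchable $4$-set (so the decomposition into $(G_B,G_C,N)$ is recovered from $G$ alone). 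My plan is to intersect the family of configurations counted there with the family for which $Z_G\ne Z_{\~G}$ (resp.\ $Z_{G^*}\ne Z_{\~G^*}$), and check that its density among all configurations is still $1-o(1)$.

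First I would carry out the minimum-degree reductions at no cost. A uniformly random graph on $n-4$ vertices is md2 (a fortiori md1) with probability $1-o(1)$, so insisting that $G_C$ be md2 (resp.\ md1) discards only an $o(1)$ fraction of configurations and leaves the asymptotics $n^3 g_{n-1}(\tfrac{1}{24}-o(1))$ intact. As in the opening paragraph of the proof of Theorem~\ref{thm:GM-zeta}, a random $N$ has at least two all-ones columns with probability $1-o(1)$, which forces both $G$ and $\~G$ to be md2 (resp.\ md1). Having discarded these $o(1)$-fractions, we are precisely in the hypothesis of Theorem~\ref{thm:GM-zeta}: for each fixed admissible $(G_B,G_C)$ the fraction of $N$ with $Z_G\ne Z_{\~G}$ (resp.\ $Z_{G^*}\ne Z_{\~G^*}$) is $1-o(1)$, and the proof of that theorem in fact shows the $o(1)$ is uniform over $G_C$, since it only ever uses crude estimates on the columns of $N$. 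Averaging over $(G_B,G_C)$, at most an $o(1)$ fraction of all configurations fail this.

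Next I would combine the exceptional sets and recount. The configurations discarded by Haemers--Spence (those with $G\simeq\~G$, with a nontrivial automorphism, or with a non-unique switchable $4$-set), together with those failing $Z_G\ne Z_{\~G}$, form an $o(1)$ fraction of all configurations, so the surviving family $\mathcal F$ is a $1-o(1)$ fraction. Every member of $\mathcal F$ is automorphism-free and uniquely switchable, and the properties $G\not\simeq\~G$ and $Z_G\ne Z_{\~G}$ then depend only on the isomorphism class of $G$; hence $\mathcal F$ is a disjoint union, over the isomorphism classes it meets, of exactly $n!$ configurations each, and the Haemers--Spence counting gives $\tfrac{1}{n!}|\mathcal F| = n^3 g_{n-1}(\tfrac{1}{24}-o(1))$ isomorphism classes. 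Each such class is represented by a graph $G$ admitting a non-isomorphic cospectral $\~G$ with $Z_G\ne Z_{\~G}$, which is the first assertion; the cone statement is identical with ``md2'' and ``$Z$'' replaced throughout by ``md1'' and ``$Z^*$''.

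The step I expect to be the main obstacle is the bookkeeping just sketched: one must be sure that imposing the extra constraint $Z_G\ne Z_{\~G}$ does not disproportionately hit the automorphism-free, uniquely-switchable configurations that Haemers--Spence actually count, and that no $G$ possessing a zeta-distinguished cospectral mate is lost merely because some other switching presentation of $G$ is exceptional. The clean way through is to keep everything at the level of configurations, where the Haemers--Spence bound genuinely reads $\#\{\text{iso classes}\}\ge\tfrac{1}{n!}\#\{\text{good configs}\}$ and the events ``good'' and ``zeta-distinguished'' each hold for a $1-o(1)$ fraction of configurations, so their conjunction does too; the only genuinely new ingredient is the uniformity in $G_C$ of the $o(1)$ in Theorem~\ref{thm:GM-zeta}, which the existing proof supplies. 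The remaining point, that a random $G_C$ is md2 (resp.\ md1) with probability $1-o(1)$, is routine.
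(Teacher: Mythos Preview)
Your approach is correct and is essentially the same as the paper's: both arguments feed the Haemers--Spence count of $4$-vertex GM pairs through Theorem~\ref{thm:GM-zeta} and note that the extra constraints (md2/md1 and $Z_G\ne Z_{\~G}$) discard only an $o(1)$ fraction. The paper's proof is three sentences and works directly at the level of the non-isomorphic pairs produced by \cite[Thm~3]{HS}, observing that almost all such pairs have both $G$ and $\~G$ md2 and that the proof of Theorem~\ref{thm:GM-zeta} only needs this (rather than $G_C$ md2); you instead stay at the configuration level, impose md2 on $G_C$ and two all-ones columns on $N$ separately, and redo the passage to isomorphism classes. Your version is more explicit about the bookkeeping (uniformity of the $o(1)$ in $G_C$, automorphism-freeness, unique switchable $4$-set), but the underlying argument is the same.
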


\begin{proof}
Haemers and Spence \cite[Thm 3]{HS} show there are at least $n^3 g_{n-1} ( \frac 1{24} - o(1) )$ non-isomorphic pairs $(G, \~G)$ obtained from
GM switching with $B$ of size 4.  Further, for almost all of these pairs, both graphs are md2.  Since, in the  argument above,
we can replace the condition that $C$ is md2 or md1 with knowing $G$ and $\~G$ are md2 or md1, we see almost all such pairs
are distinguished by $Z$ or $Z^*$.
\end{proof}

\subsection{Coalescence construction}

Suppose $G_1$ and $G_2$ are two graphs of order $n$ with the same adjacency spectra but different degree sequences.
Assume there are vertices $x_1$ of $G_1$ and $x_2$ of $G_2$ such that $G_1 - \{ x_1 \}$ and $G_2 - \{ x_2 \}$ are cospectral.  Let $U_i$ be the vertex set of $G_i - \{ x_i \}$.
Let $\Gamma$ be any graph with a fixed vertex $y$.  Let $G_i'$ be the coalescence of $(G_i, x_i)$ with $(\Gamma, y)$, i.e., the
union of $G_i$ and $\Gamma$ after identification of $x_i$ with $y$.

\begin{prop}  With notation as above, $\Spec(G_1') = \Spec(G_2')$ for any $(\Gamma, y)$.   If $G_1$ and $G_2$  are md2 and $\prod_{v \in U_2} (\deg(v) -1) \ne \prod_{v \in U_2} (\deg(v) - 1)$,
then $Z_{G_1'} \ne Z_{G_2'}$.  Similarly, if $G_1$ and $G_2$ are md1 and
$\prod_{v \in U_1} \deg(v) \ne \prod_{v \in U_2} \deg(v)$, then $Z_{G_1'^*} \ne Z_{G_2'^*}$.
\end{prop}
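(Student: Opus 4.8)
The plan is to handle the three assertions with two different tools: Schwenk's coalescence formula for the cospectrality statement, and the description of the leading coefficient of $Z_H(t)^{-1}$ from Section~\ref{sec:ihara-prop} for the two zeta statements. For $\Spec(G_1') = \Spec(G_2')$, I would invoke Schwenk's formula for a coalescence at a cut vertex $v$ of $(H_1,v)$ and $(H_2,v)$:
\[
\phi_{H_1\cdot H_2}(\lambda) = \phi_{H_1}(\lambda)\,\phi_{H_2-v}(\lambda) + \phi_{H_1-v}(\lambda)\,\phi_{H_2}(\lambda) - \lambda\,\phi_{H_1-v}(\lambda)\,\phi_{H_2-v}(\lambda).
\]
With $(H_1,v)=(G_i,x_i)$ and $(H_2,v)=(\Gamma,y)$ this expresses $\phi_{G_i'}$ purely in terms of $\phi_{G_i}$, $\phi_{G_i-x_i}$, $\phi_\Gamma$, $\phi_{\Gamma-y}$, and since $\phi_{G_1}=\phi_{G_2}$ and $\phi_{G_1-x_1}=\phi_{G_2-x_2}$ by hypothesis, $G_1'$ and $G_2'$ are cospectral. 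A preliminary observation I would record here and reuse below is that these hypotheses already force $\deg_{G_1}(x_1)=\deg_{G_2}(x_2)$: in any characteristic polynomial the $\lambda^{n-1}$-coefficient vanishes and the $\lambda^{n-2}$-coefficient is minus the number of edges, so the $\lambda^{n-2}$-coefficient of $\phi_{G_i}(\lambda)-\lambda\,\phi_{G_i-x_i}(\lambda)$ is $-\deg_{G_i}(x_i)$, and the left side is common to $i=1,2$.

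For $Z_{G_1'}\ne Z_{G_2'}$ in the md2 case, I would pass to the pruned graph $(G_i')^\pr$, on which $Z_{G_i'}$ depends only. Because $G_i$ is md2, every vertex of $G_i$ keeps degree $\ge 2$ throughout pruning (in particular the coalescence vertex, whose $G_i'$-degree is $\deg_{G_i}(x_i)+\deg_\Gamma(y)\ge 2$), so pruning $G_i'$ is exactly the ``rooted pruning'' of $(\Gamma,y)$ — repeatedly delete vertices of degree $\le 1$ other than $y$ — carried out inside the $\Gamma$-part; write $\Gamma_0$ for the result, the same graph for $i=1,2$. Thus $(G_i')^\pr$ is the coalescence of $G_i$ with $\Gamma_0$ (or just $G_i$ if $\Gamma_0$ is a single vertex), again md2, so the leading coefficient of $Z_{G_i'}(t)^{-1}$ is $\prod_v(\deg v-1)$ over $V((G_i')^\pr)$ — this is the fact from Section~\ref{sec:ihara-prop}, valid for disconnected md2 graphs by multiplicativity of $Z$ over components. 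Splitting that product, the $U_i$-factors give $\prod_{v\in U_i}(\deg_{G_i}(v)-1)$, while the factors from $\Gamma_0-y$ together with the single factor $\deg_{G_i}(x_i)+\deg_{\Gamma_0}(y)-1$ are each $\ge 1$ and — using $\deg_{G_1}(x_1)=\deg_{G_2}(x_2)$ — independent of $i$; call their product $c\ge 1$. Hence the leading coefficient of $Z_{G_i'}(t)^{-1}$ is $c\prod_{v\in U_i}(\deg_{G_i}(v)-1)$, and the hypothesis that these products differ yields $Z_{G_1'}\ne Z_{G_2'}$.

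The md1/cone case runs in exact parallel with $(G_i'^*)^\pr$ in place of $(G_i')^\pr$. Coning raises every degree by $1$ and adds a vertex of degree equal to the order of $G_i'$ (which is $\ge 2$), so pruning $G_i'^*$ merely deletes the vertices isolated in $\Gamma$, leaving $(G_i'')^*$ where $G_i''$ is the coalescence of $G_i$ with $\Gamma$ stripped of its isolated vertices; $G_i''$ is md1, hence $(G_i'')^*$ is connected and md2. Applying the same leading-coefficient fact, the leading coefficient of $Z_{G_i'^*}(t)^{-1}$ is $(|V(G_i'')|-1)\prod_{v\in V(G_i'')}\deg_{G_i''}(v)$; pulling out the order term, the degrees coming from $\Gamma$, and the coalescence-vertex degree $\deg_{G_i}(x_i)+\deg_\Gamma(y)$ — all positive, and again equal for $i=1,2$ by $\deg_{G_1}(x_1)=\deg_{G_2}(x_2)$ — reduces it to a nonzero constant times $\prod_{v\in U_i}\deg_{G_i}(v)$, so the hypothesis forces $Z_{G_1'^*}\ne Z_{G_2'^*}$.

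The hard part is the pruning bookkeeping: one must check that the coalescence vertex, and indeed every vertex of $G_i$, survives pruning so that $(G_i')^\pr$ (resp.\ $(G_i'^*)^\pr$) is genuinely a coalescence of $G_i$ with a graph not depending on $i$, and then that the ``extra'' degree factors absorbed into the constant are simultaneously nonzero and the same for the two graphs — the equality being precisely the point that needs $\deg_{G_1}(x_1)=\deg_{G_2}(x_2)$, which is itself extracted from the cospectrality hypotheses as in the first paragraph.
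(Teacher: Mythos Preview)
Your argument is correct and follows essentially the same route as the paper: cite Schwenk for the cospectrality, deduce $\deg_{G_1}(x_1)=\deg_{G_2}(x_2)$ from the edge counts forced by the two cospectrality hypotheses, and then compare the leading coefficients of $Z_{G_i'}(t)^{-1}$ (resp.\ $Z_{G_i'^*}(t)^{-1}$) via the product-of-$(d_v-1)$ fact from Section~\ref{sec:ihara-prop}. The only difference is one of detail: the paper dispatches the zeta parts in two sentences, whereas you carry out the pruning bookkeeping explicitly to verify that every $G_i$-vertex survives in $(G_i')^\pr$ (resp.\ $(G_i'^*)^\pr$) and that the remaining factors are nonzero and independent of $i$ --- exactly the points the paper leaves implicit.
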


\begin{proof} The cospectrality is due to Schwenk \cite{schwenk}.

For the distinction by zeta functions, note that the fact that $G_1 - \{ x_1 \}$ and $G_2 - \{ x_2\}$ are cospectral implies that
$\deg_{G_1}(x_1) = \deg_{G_2}(x_2)$ as the spectrum determines the number of edges.  Note $\deg_{G_i}(v) = \deg_{G_i'}(v)$
for any $v \in U_i$ and $\deg_{G_1'}(v) = \deg_{G_2'}(v)$ for any vertex $v$ of $\Gamma$.
The assertions follow as $Z_G$ (resp.\ $Z_{G^*}$) determines the product of the degrees minus 1 (degrees)
of the pruned graph $G^\pr$.
\end{proof}

\begin{ex} Let $G_1$ and $G_2$ be the graphs  {\verb+F?zPw+} and {\verb+F@Rfo+}
on 7 vertices and 10 edges pictured below, where $x_1$
and $x_2$ are the white vertices.
\begin{center}
\begin{tikzpicture}
\tikzset{VertexStyle/.style = {shape = circle, inner sep = 0pt, outer sep = 0pt, minimum size = 5pt, draw} }
\useasboundingbox (0,0) rectangle (2.0cm,2.25cm);
\Vertex[style={draw=black,text=white, shape=circle},L=\hbox{$.$},x=0.9469cm,y=2.0cm]{v0}
\tikzset{VertexStyle/.style = {shape = circle, fill = black, inner sep = 0pt, outer sep = 0pt, minimum size = 4pt, draw} }
\Vertex[L=\hbox{$.$},x=1.7058cm,y=1.838cm]{v1}
\Vertex[L=\hbox{$.$},x=0.0cm,y=0.3868cm]{v2}
\Vertex[L=\hbox{$.$},x=2.0cm,y=0.0cm]{v3}
\Vertex[L=\hbox{$.$},x=0.6482cm,y=1.2049cm]{v4}
\Vertex[L=\hbox{$.$},x=1.7025cm,y=1.0359cm]{v5}
\Vertex[L=\hbox{$.$},x=1.0112cm,y=0.3051cm]{v6}
\Edge[](v0)(v4)
\Edge[](v0)(v5)
\Edge[](v1)(v4)
\Edge[](v1)(v5)
\Edge[](v2)(v4)
\Edge[](v2)(v6)
\Edge[](v3)(v5)
\Edge[](v3)(v6)
\Edge[](v4)(v6)
\Edge[](v5)(v6)
\end{tikzpicture}
\hspace{1in}
\begin{tikzpicture}
\tikzset{VertexStyle/.style = {shape = circle, inner sep = 0pt, outer sep = 0pt, minimum size = 5pt, draw} }
\useasboundingbox (0,0) rectangle (2.0cm,2.0cm);
\Vertex[style={draw=black,text=white, shape=circle},L=\hbox{$.$},x=0.9469cm,y=2.0cm]{v0}
\tikzset{VertexStyle/.style = {shape = circle, fill = black, inner sep = 0pt, outer sep = 0pt, minimum size = 4pt, draw} }
\Vertex[L=\hbox{$.$},x=1.7058cm,y=1.838cm]{v5}
\Vertex[L=\hbox{$.$},x=0.0cm,y=0.3868cm]{v4}
\Vertex[L=\hbox{$.$},x=2.0cm,y=0.0cm]{v3}
\Vertex[L=\hbox{$.$},x=0.6482cm,y=1.2049cm]{v1}
\Vertex[L=\hbox{$.$},x=1.7025cm,y=1.0359cm]{v2}
\Vertex[L=\hbox{$.$},x=1.0112cm,y=0.3051cm]{v6}
\Edge[](v0)(v5)
\Edge[](v0)(v6)
\Edge[](v1)(v4)
\Edge[](v1)(v5)
\Edge[](v1)(v6)
\Edge[](v2)(v3)
\Edge[](v2)(v5)
\Edge[](v2)(v6)
\Edge[](v3)(v6)
\Edge[](v4)(v6)
\end{tikzpicture}
\end{center}
Then $G_1$ and $G_2$ are cospectral but not isomorphic, whereas $G_1 - {x_1} $ and $G_2 -{x_2}$ are cospectral
because they are isomorphic.
Here $x_1$ and $x_2$ have degree 2.  The other vertex degrees are $(4,4,4,2,2,2)$
for $G_1$ and $(5,3,3,3,2,2)$ for $G_2$.  It is clear that the products of the degrees and the products of the degrees
minus 1 are different.  Thus, for any coalescences $G_1'$ and $G_2'$ of $(G_1, x_1)$ and $(G_2, x_2)$ with
any $(\Gamma,y)$, we have $Z_{G_1'} \ne Z_{G_2'}$ and $Z_{G_1'^*} \ne Z_{G_2'^*}$.
\end{ex}

We assumed that $G_1$ and $G_2$ are md2 or md1 in this proposition for simplicity, but this is not necessary.
What one really needs is a condition on pruned subgraphs of $G_1'$ and $G_2'$.

Note the coalescence construction includes the case of a disjoint union.
If $H_1$ and $H_2$ are cospectral, we can take $G_i$ to be
$H_i$ disjoint union a single vertex $x_i$.  Then $G_1$ and $G_2$ are cospectral, as are
 $G_1 - \{ x_1 \} = H_1$ and $G_2 - \{ x_2 \} = H_2$ by assumption.  Let $\Gamma$ be any graph and
 $y$ any vertex in $\Gamma$.  Then the coalescence of $(G_i, x_i)$ with $(\Gamma, y)$ is simply the disjoint
 union $H_i \sqcup \Gamma$ of $H_i$ with $\Gamma$.  However in the case of disjoint unions, we already know
  the stronger statement
  that $Z_{H_1} \ne Z_{H_2}$ implies $Z_{G_1'} \ne Z_{G_2'}$ since zeta functions factor into products over their
 connected components (though this factorization is not true for the zeta of the cones $Z_{G_i'^*}$).
 E.g., if $H_1$ and $H_2$ are the unique pair of cospectral graphs on 5 vertices, then
 $Z_{H_1 \sqcup \Gamma} \ne Z_{H_2 \sqcup \Gamma}$ for any $\Gamma$, though $G_1$ and $G_2$ are not md2.

\subsection{Join construction}

Suppose $G_1$ and $G_2$ are two graphs of order $n$ which have the same Laplacian spectra but different degree sequences.
Let $\Gamma$ be an arbitrary graph.  If $G_1$ or $G_2$ has an isolated vertex, then assume $\Gamma$ has at
least 2 vertices.  Then the joins
 $G_1 \vee \Gamma$ and $G_2 \vee \Gamma$ are connected and md2.

\begin{prop} With notation as above, $\Spec_L(G_1 \vee \Gamma) = \Spec_L(G_2 \vee \Gamma)$ for any $\Gamma$.
However $Z_{G_1 \vee \Gamma} \ne Z_{G_2 \vee \Gamma}$
for all but finitely many $\Gamma$.  Specifically, there is a finite set $S$ consisting of at most $n-1$ integers
 such that $Z_{G_1 \vee \Gamma} \ne Z_{G_2 \vee \Gamma}$ for any $\Gamma$ whose order $r$ does not lie in $S$.
\end{prop}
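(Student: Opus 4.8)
The plan is to treat the two assertions separately: the Laplacian statement follows from the standard formula for the spectrum of a join, and the zeta statement from the degree-product invariant attached to the Ihara zeta function of a connected md2 graph.

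For the Laplacian part, I would invoke the well-known fact that if $G$ has order $n$ with Laplacian eigenvalues $0 = \lambda_1, \lambda_2, \dots, \lambda_n$ and $H$ has order $r$ with Laplacian eigenvalues $0 = \mu_1, \mu_2, \dots, \mu_r$, then the join $G \vee H$ has Laplacian eigenvalues $0$ and $n+r$ together with $\lambda_i + r$ for $2 \le i \le n$ and $\mu_j + n$ for $2 \le j \le r$. Since $G_1$ and $G_2$ share the order $n$ and $\Spec_L(G_1) = \Spec_L(G_2)$, plugging $H = \Gamma$ into this formula yields $\Spec_L(G_1 \vee \Gamma) = \Spec_L(G_2 \vee \Gamma)$ for every $\Gamma$.

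For the zeta part, write $r$ for the order of $\Gamma$. Under the stated hypotheses $G_i \vee \Gamma$ is connected and md2, so by Section \ref{sec:ihara-prop} the function $Z_{G_i \vee \Gamma}$ determines the product of $(\deg - 1)$ over all vertices. In $G_i \vee \Gamma$ a vertex $v$ of $G_i$ has degree $\deg_{G_i}(v) + r$ while a vertex $w$ of $\Gamma$ has degree $\deg_\Gamma(w) + n$, so that product factors as $P_i(r)\, Q(r)$, where
\[
P_i(r) = \prod_{v \in V(G_i)} \bigl(r + \deg_{G_i}(v) - 1\bigr), \qquad
Q(r) = \prod_{w \in V(\Gamma)} \bigl(\deg_\Gamma(w) + n - 1\bigr),
\]
and $Q(r) \ne 0$ does not depend on $i$. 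Hence $P_1(r) \ne P_2(r)$ forces $Z_{G_1 \vee \Gamma} \ne Z_{G_2 \vee \Gamma}$.

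It then remains to control how many $r$ can satisfy $P_1(r) = P_2(r)$. Regarding $P_1$ and $P_2$ as polynomials in $r$, each is monic of degree $n$ with root multiset $\{\, 1 - \deg_{G_i}(v) : v \in V(G_i)\,\}$; since $G_1$ and $G_2$ have distinct degree sequences these multisets differ, so $P_1 \ne P_2$ and, the leading terms cancelling, $P_1 - P_2$ is a nonzero polynomial of degree at most $n-1$. Taking $S$ to be its (at most $n-1$) integer roots finishes the argument. The only parts needing care are the bookkeeping ones: verifying that $G_i \vee \Gamma$ is genuinely md2 --- this is exactly where the hypothesis that $\Gamma$ has at least two vertices when some $G_i$ has an isolated vertex enters, ensuring those orders $r$ need not be thrown into $S$ --- and checking the cancellation of leading terms so that $\deg(P_1 - P_2) \le n-1$. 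I do not expect any serious difficulty beyond this.
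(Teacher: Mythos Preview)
Your proof is correct and follows essentially the same approach as the paper: for the Laplacian assertion you spell out the standard join-spectrum formula where the paper simply asserts the result, and for the zeta assertion both you and the paper use the degree-product invariant and reduce to showing that a difference of two monic degree-$n$ polynomials built from the degree sequences is a nonzero polynomial of degree at most $n-1$. The only cosmetic difference is that the paper works with the shifted variable $x = r-1$ via $f(x)=\prod(d_i+x)-\prod(d_i'+x)$, whereas you work directly with $P_1(r)-P_2(r)$.
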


\begin{proof}
The first part is true for any $\Gamma$.

Let $d_1, \ldots, d_n$ be the degree sequence for $G_1$ and $d_1', \ldots, d_n'$ be the degree sequence for $G_2$.
Let $\delta_1, \ldots, \delta_r$ be the degree sequence for $\Gamma$.
Then the degree sequence for $G_1 \vee \Gamma$ is $d_1+r, \ldots, d_n + r, \delta_1 + n, \ldots, \delta_r + n$ and similarly
for $G_2 \vee \Gamma$.  Assume $r \ge 2$ so $G_1 \vee \Gamma$ and $G_2 \vee \Gamma$ are md2.
Consequently $Z_{G_1 \vee \Gamma}$ determines
\[ \prod (d_i + r - 1) \cdot \prod (\delta_j + n - 1) \]
and similarly for $G_2 \vee \Gamma$.  Hence $Z_{G_1 \vee \Gamma} \ne Z_{G_2 \vee \Gamma}$ if
\[  \prod (d_i + r - 1) \ne \prod (d_i' + r - 1). \]
Consider the polynomial
\[ f(x) = \prod (d_i + x) - \prod (d_i' + x), \]
which has degree $< n$.
The previous equation holds if and only if $r-1$ is not a root of $f(x)$, which can only happen for at most $n-1$ values of $r$.
\end{proof}

Note, one can replace the bound on the size of $S$ by the number of differing vertex degrees (counting multiplicity)
with $G_1$ and $G_2$.  Also, this proposition gives a non-algorithmic proof of Lemma \ref{lem:ds} by showing that any two order $n$ graphs with different degree sequences can by distinguished by the zeta functions of $n$ joins with graphs of distinct orders.

\begin{ex}
Let $G_1$ and $G_2$ be {\verb+ECZo+} and {\verb+EEr_+}.  These are graphs on 6 vertices with the same Laplacian spectra,
but $G_1$ has degree sequence $(4,2,2,2,2,2)$ and $G_2$ has degree sequence $(3,3,3,2,2,1)$.
\begin{center}

\begin{tikzpicture}
\tikzset{VertexStyle/.style = {
                             shape = circle,
fill = black,
inner sep = 0pt,
outer sep = 0pt,
minimum size = 4pt,
draw} }
\useasboundingbox (0,0) rectangle (2.0cm,2.25cm);
\Vertex[L=\hbox{$.$},x=0.9523cm,y=2.0cm]{v0}
\Vertex[L=\hbox{$.$},x=2.0cm,y=0.6068cm]{v1}
\Vertex[L=\hbox{$.$},x=0.5385cm,y=0.4779cm]{v2}
\Vertex[L=\hbox{$.$},x=0.0cm,y=1.8548cm]{v3}
\Vertex[L=\hbox{$.$},x=1.4639cm,y=0.0cm]{v4}
\Vertex[L=\hbox{$.$},x=0.9185cm,y=1.2386cm]{v5}
\Edge[](v0)(v3)
\Edge[](v0)(v5)
\Edge[](v1)(v4)
\Edge[](v1)(v5)
\Edge[](v2)(v4)
\Edge[](v2)(v5)
\Edge[](v3)(v5)
\end{tikzpicture}
\hspace{1in}
\begin{tikzpicture}
\tikzset{VertexStyle/.style = {
                             shape = circle,
fill = black,
inner sep = 0pt,
outer sep = 0pt,
minimum size = 4pt,
draw} }
\useasboundingbox (0,0) rectangle (2.0cm,2.0cm);
\Vertex[L=\hbox{$.$},x=0.2249cm,y=0.5241cm]{v0}
\Vertex[L=\hbox{$.$},x=1.6176cm,y=0.5329cm]{v1}
\Vertex[L=\hbox{$.$},x=0.4933cm,y=2.0cm]{v2}
\Vertex[L=\hbox{$.$},x=0.0cm,y=0.0cm]{v3}
\Vertex[L=\hbox{$.$},x=2.0cm,y=0.0203cm]{v4}
\Vertex[L=\hbox{$.$},x=0.757cm,y=1.2255cm]{v5}
\Edge[](v0)(v3)
\Edge[](v0)(v4)
\Edge[](v0)(v5)
\Edge[](v1)(v3)
\Edge[](v1)(v4)
\Edge[](v1)(v5)
\Edge[](v2)(v5)
\end{tikzpicture}

\end{center}
For any (non-null) graph $\Gamma$,
the graphs $G_1 \vee \Gamma$ and $G_2 \vee \Gamma$ are automatically md2.
The polynomial $f(x)$ in the previous
proof is just $f(x) = (4+x)(2+x)^5 - (3+x)^3(2+x)^2(1+x) =(2+x)^2 (2x+5)$.  This has no positive roots, so
$Z_{G_1 \vee \Gamma} \ne Z_{G_2 \vee \Gamma}$ for any $\Gamma$.

The same conclusion is true if we replace $G_1$ and $G_2$ by their complements.
\end{ex}

\begin{cor} There are at least $2g_{n-6}$ pairs of graphs $(G_1, G_2)$ of order $n$ which have the same Laplacian spectra but
different Ihara zeta functions.
\end{cor}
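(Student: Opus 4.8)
The plan is to combine the join proposition with the single explicit Laplacian-cospectral pair on six vertices from the preceding example, together with its complement pair, and then to check that no two of the resulting $2g_{n-6}$ pairs coincide. Write $H_1 = \texttt{ECZo}$ and $H_2 = \texttt{EEr\_}$, so that $H_1, H_2$ are connected, Laplacian cospectral, with the distinct degree sequences $(4,2,2,2,2,2)$ and $(3,3,3,2,2,1)$; their complements $\bar H_1, \bar H_2$ are again connected, Laplacian cospectral, with the distinct degree sequences $(3,3,3,3,3,1)$ and $(4,3,3,2,2,2)$. Assume first $n \ge 7$. Then every graph $\Gamma$ on $n-6$ vertices is non-null, so by the join proposition --- in the sharp form of the example, valid for all non-null $\Gamma$ because the relevant polynomial $(2+x)^2(2x+5)$ (and its counterpart for the complement pair) has no positive root --- the graphs $H_1 \vee \Gamma$ and $H_2 \vee \Gamma$ are md2, Laplacian cospectral, and satisfy $Z_{H_1 \vee \Gamma} \ne Z_{H_2 \vee \Gamma}$, and the same for $\bar H_1 \vee \Gamma$ and $\bar H_2 \vee \Gamma$. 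Hence each of the $g_{n-6}$ isomorphism types of $\Gamma$ produces two pairs of non-isomorphic, Laplacian-cospectral graphs of order $n$ with different Ihara zeta functions, and what remains is to show these $2g_{n-6}$ pairs are distinct.

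For that I would pass to complements, using $\overline{X \vee \Gamma} = \bar X \sqcup \bar\Gamma$. Let $\mathcal A$ be the free abelian group on isomorphism classes of finite connected graphs, and for a graph $H$ set $\phi(H) = \sum_C [C] \in \mathcal A$, the sum over the connected components $C$ of $\bar H$. Then $\phi$ separates isomorphism classes of graphs --- it determines $\bar H$, hence $H$ --- and is additive: $\phi(X \vee \Gamma) = \phi(X) + \phi(\Gamma)$. Since $\mathcal A$ is torsion-free, the equality $\{\phi(H_1) + \phi(\Gamma), \phi(H_2) + \phi(\Gamma)\} = \{\phi(H_1) + \phi(\Gamma'), \phi(H_2) + \phi(\Gamma')\}$ forces, by adding the two entries, $2\phi(\Gamma) = 2\phi(\Gamma')$, so $\phi(\Gamma) = \phi(\Gamma')$ and $\Gamma \simeq \Gamma'$; thus $\Gamma \mapsto \{H_1 \vee \Gamma, H_2 \vee \Gamma\}$ is injective, and likewise for the complement family. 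If instead a pair of the first family equalled one of the second, $\{\phi(H_1) + \phi(\Gamma), \phi(H_2) + \phi(\Gamma)\} = \{\phi(\bar H_1) + \phi(\Gamma'), \phi(\bar H_2) + \phi(\Gamma')\}$, then subtracting the two matched equations gives $\phi(H_1) - \phi(H_2) = \pm(\phi(\bar H_1) - \phi(\bar H_2))$; since $H_1, H_2, \bar H_1, \bar H_2$ are all connected we have $\phi(H_i) = [\bar H_i]$ and $\phi(\bar H_i) = [H_i]$, so this reads $[H_1] + [\bar H_1] = [H_2] + [\bar H_2]$ or $[H_1] + [\bar H_2] = [H_2] + [\bar H_1]$ in $\mathcal A$, and both are impossible since $[H_1], [H_2], [\bar H_1], [\bar H_2]$ are four distinct basis elements (their degree sequences are distinct). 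So the $2g_{n-6}$ pairs are distinct, giving the corollary for $n \ge 7$. The case $n = 6$ is immediate: $g_0 = 1$, and $(H_1, H_2)$ and $(\bar H_1, \bar H_2)$ are themselves two such pairs --- pruning the leaf of $H_2$ (resp.\ of $\bar H_1$) lowers the degree of the reciprocal zeta polynomial, so $Z_{H_1} \ne Z_{H_2}$ and $Z_{\bar H_1} \ne Z_{\bar H_2}$ --- and the two pairs differ since the degree sequences in play are all distinct.

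The only genuine obstacle is this last step: ruling out repetitions both within and across the two families of joins, i.e.\ understanding when $X \vee \Gamma \simeq Y \vee \Gamma'$. The observation that makes it routine is that complementing turns joins into disjoint unions, so the multiset of connected components of the complement (equivalently its image $\phi$ in the free abelian group) is an additive isomorphism invariant, and the whole question then reduces to the elementary fact that $H_1, H_2, \bar H_1, \bar H_2$ are connected and pairwise non-isomorphic --- which one reads off from the four degree sequences above.
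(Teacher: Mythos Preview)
Your approach is exactly the one the paper intends: take the explicit six-vertex pair $(H_1,H_2)$ from the example together with the complement pair $(\bar H_1,\bar H_2)$, and join each with the $g_{n-6}$ graphs on $n-6$ vertices. The paper states the corollary immediately after the example with no further argument, so your write-up is strictly more complete than the paper's.

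The extra ingredient you supply --- the distinctness of the resulting $2g_{n-6}$ unordered pairs --- is something the paper does not address at all. Your argument for it is correct and pleasant: passing to complements turns joins into disjoint unions, so the map $\phi$ recording the multiset of components of $\bar H$ is additive under $\vee$ and separating, and the question reduces to the linear independence of $[H_1],[H_2],[\bar H_1],[\bar H_2]$ in the free abelian group, which follows since the four graphs are connected with pairwise distinct degree sequences. Your separate treatment of $n=6$ (where $\Gamma$ would be the null graph, excluded in the example) via the degree of $Z^{-1}$ after pruning is also correct and fills a small boundary case the paper glosses over. One cosmetic point: you cite the example for the complement-pair polynomial having no positive root without writing it down; for completeness, it is $-(3+x)^2(2x+5)$, with the same nonnegative-root-free conclusion.
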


\subsection{A new construction} \label{sec:new-con}

There is one pair of non-isomorphic md2 graphs on 9 vertices which have the same generalized characteristic polynomial $\phi_{ADJ}$, the pair from Example \ref{ex:9verts}.  This is the smallest such example.
This pair does not occur as the result of GM* switching for the following reason:
the matrices $A+xD$ are similar for all $x$, but there is no matrix $P$ such that $P^{-1}(A_1+xD_1)P = (A_2+xD_2)$ for all $x$, as must be the case for GM* pairs.
(In \cite{SS}, it was mistakenly written that all pairs of connected md2 graphs on
$n \le 11$ vertices with the same zeta function, adjacency spectrum, and Laplacian spectra are obtained by GM* switching, but this is false
as this example shows.  The second author of that paper informed us that the conclusion
of that sentence, ``are obtained by GM* switching,'' should be ``have the same $\phi_{ADJ}$.'')

There do however exist matrices $P$ and $R$ such that $(A_1+xD_1)(P+xR) = (P+xR)(A_2+xD_2)$ and $P+xR$ commutes with $J$ for all $x$ and is invertible for all real valued $x$.
Here we prove this example is part of a more general construction of pairs of graphs
with the same $\phi_{ADJ}$ which cannot be explained by GM* switching.  The fact that the
conjugating matrix $P+xR$ needs to depend on $x$ makes this construction more delicate (and
complicated) than GM* switching (and as far as we know, there are no other such constructions).
So we first describe it in a concrete way, and then remark afterwards
what are the necessary abstract conditions for this construction to work.

\begin{con} \label{our-construction}
Consider adjacency matrices $A_1, A_2$  of the following form:
$$ A_1 = \bmx B_1 & B_{12} & \cdots & B_{1k} & N_1 \\ {}^t B_{12} & B_2 & \cdots & B_{2k} & N_2 \\
\vdots & &\ddots & & \vdots \\ {}^t B_{1k} & \cdots & {}^t B_{k-1,k} &  B_k & N_k \\
{}^t  N_1 & \cdots & {}^t N_{k-1} & {}^t N_k & C \emx,
A_2 = \bmx B_1 &  {}^t B_{12} & \cdots &  {}^t B_{1k} & N_1 \\B_{12} & B_2 & \cdots &  {}^t B_{2k} & N_2 \\
\vdots & &\ddots & & \vdots \\ B_{1k} & \cdots &  B_{k-1,k} &  B_k & N_k \\
{}^t  N_1 & \cdots & {}^t N_{k-1} & {}^t N_k & C \emx$$
Here we allow the $B_i$'s to be chosen from among the following $4 \times 4$ adjacency matrices given in $2 \times 2$ block form by:
\[ \bmx 0 & 0\\  0 & 0 \emx, \bmx 0 & I \\ I & w \emx,\bmx 0 & J \\ J & 0 \emx, \bmx w & 0 \\ 0 & w \emx, \bmx w & J \\ J & w  \emx, \]
where $w = J - I$.
The $B_{ij}$'s ($i<j$) are allowed to be arbitrarily chosen from among the following 0-1 matrices with constant row and column sums:
\[  \bmx 0 & 0\\ 0 & 0 \emx, \bmx J & J \\ J & J  \emx, \bmx 0  & J \\ J & 0 \emx, \bmx J & 0 \\ 0 & J \emx, \bmx I & 0 \\ 0 & I\emx, \bmx w & 0\\ 0 & w \emx, \bmx 0 & I \\ w & 0 \emx, \bmx 0 & w \\ I & 0\emx, \]
\[ \bmx I & J \\ J & I\emx, \bmx w & J \\ J & w \emx, \bmx J & w \\ I & J \emx, \bmx J & I \\ w & J \emx, \bmx I & I \\ w & I \emx, \bmx I & w \\ I & I \emx, \bmx w & I \\ w & w \emx, \bmx w & w \\ I & w \emx.
\]
We allow $C$ to be any $m\times m$ adjacency matrix, and the $N_i$'s are $4\times m$ matrices such that each column of $N_i$ consists of all zeros or all ones.

Then $\phi_{ADJ}^{G_1} = \phi_{ADJ}^{G_2}$, where $G_i$ is the graph with adjacency matrix $A_i$.
\end{con}
\begin{proof}
Note that as each $B_{ij}$ has constant row and column sums and each $N_i$ has constant row sums, $G_1$ and $G_2$ have the same degree matrix, call this $D$.   Consider the following $4\times 4$ matrices:
$$Q = \bmx w & w-I\\ w-I & I \emx, S = \frac{1}{2}\bmx I-w & 0 \\ 0 & w-I\emx$$
Let $P$ be the block diagonal matrix whose first $k$ blocks are the matrix $Q$ and last block is the $m\times m$ identity matrix.  Let $R$ be the block diagonal matrix whose first $k$ blocks are the matrix $S$ and last block is the $m\times m$ zero matrix.

One can check that $(A_1+xD)(P+xR) = (P+xR)(A_2+xD)$ for all $x$.  Note that $P$ has constant row and column sums of 1 and $R$ has constant row and column sums of $0$ so $P+xR$ commutes with $J$ for all $x$.  Therefore $(A_1+xD + yJ)(P+xR) = (P+xR)(A_2+xD + yJ)$ for all $x,y$.  Also, $\det(P+xR) = \det(Q+xS)^k = (-x^2+2x-5)^k$ which is nonzero for all real $x$.  It follows that $A_1+xD + yJ$ and $A_2+xD + yJ$ have the same eigenvalues for all $x,y$ so $G_1$ and $G_2$ have the same $\phi_{ADJ}$.
\end{proof}

\begin{ex} \label{ex:9verts2} The pair of graphs $G_1$ and $G_2$ from Example \ref{ex:9verts} can be obtained from the above construction as follows.  Take $k=2$ and $B_1 = B_2 = \bmx 0 & I \\ I & w \emx$ and $B_{12}= \bmx I & I \\ w & I \emx$.  Take $C$ to be the $1\times 1$ zero matrix, $N_1$ to be the $4\times 1$ zero vector and $N_2$ to be the $4\times 1$ all ones vector.  The resulting graphs are the graphs from Example \ref{ex:9verts}, which we redraw below with the vertices labeled in the order that they appear in this description.
\begin{center}
\begin{tikzpicture}[scale=0.75,transform shape]
\Vertex[x=0,y=0]{2}
\Vertex[x=0,y=1]{4}
\Vertex[x=0,y=2]{3}
\Vertex[x=0,y=3]{1}
\Vertex[x=2,y=0]{6}
\Vertex[x=2,y=1]{8}
\Vertex[x=2,y=2]{7}
\Vertex[x=2,y=3]{5}
\Vertex[x=4,y=1.5]{9}
 \Edge(1)(3)
 \Edge(3)(4)
 \Edge(4)(2)
 \Edge(5)(7)
 \Edge(7)(8)
 \Edge(8)(6)
 \Edge(1)(5)
 \Edge(1)(7)
 \Edge(3)(7)
 \Edge(3)(6)
 \Edge(4)(5)
 \Edge(4)(8)
 \Edge(2)(8)
 \Edge(2)(6)
 \Edge(5)(9)
 \Edge(6)(9)
 \Edge(7)(9)
 \Edge(8)(9)
\end{tikzpicture}
\hspace{1cm}
\begin{tikzpicture}[scale=0.75,transform shape]
\Vertex[x=0,y=0]{2}
\Vertex[x=0,y=1]{4}
\Vertex[x=0,y=2]{3}
\Vertex[x=0,y=3]{1}
\Vertex[x=2,y=0]{6}
\Vertex[x=2,y=1]{8}
\Vertex[x=2,y=2]{7}
\Vertex[x=2,y=3]{5}
\Vertex[x=4,y=1.5]{9}
 \Edge(1)(3)
 \Edge(3)(4)
 \Edge(4)(2)
 \Edge(5)(7)
 \Edge(7)(8)
 \Edge(8)(6)
 \Edge(1)(5)
 \Edge(1)(8)
 \Edge(3)(7)
 \Edge(3)(5)
 \Edge(4)(6)
 \Edge(4)(8)
 \Edge(2)(7)
 \Edge(2)(6)
 \Edge(5)(9)
 \Edge(6)(9)
 \Edge(7)(9)
 \Edge(8)(9)
\end{tikzpicture}
\end{center}

\end{ex}

If in the previous example, we take $B_1, B_2, B_{12}$ as before but let $C$ be any adjacency matrix and $N_1 = 0$ and $N_2=J$ (appropriately sized all zero and all ones matrices respectively), then this construction always results in a non-isomorphic pair of graphs.  Hence this construction produces at least $g_{n-8}$ non-isomorphic graphs with the same $\phi_{ADJ}$ on $n$ vertices.
On 10 vertices, there are 6 different non-isomorphic pairs of graphs that can be built using this construction.

This construction can be made more general.  Given a set of $n\times n$ adjacency matrices $B_i$ and a set of $n\times n$ matrices $B_{ij}$ with 0 and 1 entries and constant row and column sums, then we can do an analogous construction if there exist $n\times n$ matrices $Q$ and $S$ with the following properties.  First, $Q$ and $S$ should have constant row and column sums of 1 and 0 respectively and have $\det(Q+xS)$ not equal to the zero polynomial.  If $D_i$ is the degree matrix of $M_i$, then we need $QB_i= B_iQ$, $D_iS=SD_i$, and $D_iQ-QD_i = SB_i-B_iS$ for all $i$.  Also, we need $B_{ij}Q = Q{}^tB_{ij}$, ${}^tB_{ij} Q = QB_{ij}$, $B_{ij}S = S{}^tB_{ij}$, and ${}^tB_{ij}S = SB_{ij}$.  If all of these properties are satisfied, then any $A_1$ and $A_2$ built in the same manner as in the construction will result in graphs with the same $\phi_{ADJ}$.  Note that at least one of the $B_{ij}$ must be non-symmetric for $A_1$ and $A_2$ to be different, but this does not guarantee that the resulting graphs will be non-isomorphic.

\section{Calculations and Conjectures} \label{sec5}

In this section, we present our computational study of comparing graphs via the usual spectral methods ($A$, $L$ and $|L|$)
with methods (M1)--(M4).  Our findings, together with the above results, lead us to several formal and informal conjectures.

In \cite{HS}, Haemers and Spence enumerated the number of graphs on $n \le 11$ vertices which are not DS, not $L$-DS, and not $|L|$-DS (see also \cite{BS2} and \cite{spence:web} for some errata
and additional data for $n=12$).  This data is summarized in the $A$, $L$ and $|L|$ columns
of Table \ref{tab:main}.  They also enumerated the number of non-DS graphs explained by 2-GM switching and 2-GM* switching.
This data is recalled in the 2-GM and 2-GM* columns of Table \ref{tab:main}.  The 2-GM column is a lower bound for the $A$ column,
and the 2-GM* will be a lower bound for all other columns in the table.

Our main numerical results are a similar enumeration of the number of graphs on $n \le 11$ vertices
which are not determined by methods (M1)--(M4) (considering these methods separately---considering them in tandem is equivalent to just using (M4)).
The $\barth$ column of Table \ref{tab:main} gives the number of order $n$ graphs which are not \DZb, i.e., not determined by (M3).
(Recall that non-\DZb\ implies non-DS, non-$L$-DS, and non-$|L|$-DS.)
It---surprisingly to us---turned out that for $n \le 11$ being DZ*, \DZZ\ and \DZZb\ (i.e.,
not determined by (M1), (M2) or (M4)) are all equivalent.  The number of graphs
which do not satisfy these conditions is given in
the $\barth \bar{\barth}$ column.

\begin{table}[!h]
\small
\caption{Counting graphs not determined by spectral invariants \newline
(M3) is in the $\barth$ column;
(M1), (M2) and (M4) are in the $\barth \bar \barth$ column}
\centering
\begin{tabular}{r|r|r|r|r|r|r|r|r}
$n$  & \# graphs & $A$ & $L$ & $|L|$ & $\barth$ & $\barth \bar \barth$ & 2-GM & 2-GM*\\
\hline
$\le 5$ & 51 & 2 & 0 & 6 & 0 & 0 &0&0\\
6 & 156 & 10 & 4 & 16&0 & 0&0&0\\
7 & 1,044   & 110 & 130 & 102& 0& 0&40 & 0\\
8 & 12,346   & 1,722 & 1,767 & 1,201 &0 &0 &1,054 & 0\\
9 & 274,668   & 51,038  & 42,595 & 19,001 & 2& 2 & 38,258 & 0\\
10 & 12,005,168  & 2,560,606 & 1,412,438 & 636,607 & 10,146 & 10,140 &2,047,008 & 9,480\\
11 & 1,018,997,864  &  215,331,676 & 91,274,836  & 38,966,935 &  1,353,426 & 1,353,402  & 176,895,408 & 1,297,220
\end{tabular}
\label{tab:main}
\end{table}

This data suggests several things: (i) there is not much difference among (M1)--(M4); (ii) any of (M1)--(M4)
seems to do much better than $A$, $L$ or $|L|$; (iii) most GM-pairs are distinguished by (M1)--(M4);
(iv) most graphs not distinguished by (M1)--(M4) are explained by 2-GM* switching.
(We note that for $n=11$, 216 additional non-\DZZb\ graphs are explained by 3-GM* switching, but none for
$n \le 10$.\footnote{To enumerate graphs obtained by 3-GM* switching, we observe that to get
a non-isomorphic switch $\~G$ from $G$, one needs at least one of the $B_i$'s to
be even of size $\ge 4$, say $B_1$.  One may also assume $B_2$ has size $> 1$,
and the size of $B_2$ is not relatively prime to the size of $B_1$.  Then
for each $G$ which is not distinguished by $\phi_{ADJ}$, one can check if it satisfies the
3-GM* condition by iterating first through all ``admissible'' choices for $V_{B_1}$,
then through choices for $V_{B_2}$.}  Several more non-\DZZb\ graphs
for $9 \le n \le 11$ may also be explained by Construction \ref{our-construction}.)
These suggestions are
congruous with our work in previous sections.  We just state one of them
as a formal conjecture.

\begin{conj} \label{conj:main} We have: (i)
almost all graphs are DZ*; and (ii) almost all non-DS graphs are DZ*.  Further, (iii) if $\mathcal H_n$
(resp.\ $\mathcal H'_n$)
is the set of non-DS (resp.\ non-DZ*) graphs of order $n$, then $\# \mathcal H'_n / \# \mathcal H_n \to 0$
as $n \to \infty$.

The above statements are also true if we replace DZ* with any of \DZZ, \DZb, and \DZZb.
\end{conj}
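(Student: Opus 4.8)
The statement is a conjecture, so what follows is a program rather than a proof. The first point is that the three parts are not logically independent. Every non-DZ* graph that happens also to be non-DS lies in the full set of non-DZ* graphs, so part (iii) immediately gives part (ii); part (iii) together with Haemers' conjecture (almost all graphs are DS) gives part (i), and part (i) would also follow from any direct bound $\#\{G\text{ of order }n:G\text{ not DZ*}\}=o(g_n)$. Moreover the four invariant-versions (DZ*, \DZZ, \DZb, \DZZb) can be treated uniformly: the constructions that produce graphs failing any one of these all produce graphs with the same $\phi_{ADJ}$, hence failing all four and being non-DS, while generic GM pairs are split in every version (by $\prod_i d_i^*$ and the join-degree constraints of Section \ref{sec:ihara-prop} for DZ*, \DZZ, \DZZb, and by the full degree sequence for \DZb via \cite{WLLX}). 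So the plan is to concentrate on part (iii) for DZ*: show that failures of $\phi_{T^*}$-cospectrality are asymptotically negligible compared to failures of $A$-cospectrality, and reduce everything to counting unordered pairs $\{G_1,G_2\}$ with $n(G_1)=n(G_2)$ and $Z_{G_1^*}=Z_{G_2^*}$.

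The main engine would be the constructions of Section \ref{sec:constructions}. Haemers and Spence produce at least $n^3 g_{n-1}(\tfrac1{24}-o(1))$ non-isomorphic cospectral pairs by 2-GM switching with a $4\times4$ block, and it is widely believed — though this is essentially as hard as Haemers' conjecture itself — that GM switching in its various forms accounts for asymptotically almost all non-DS graphs. Granting this, Theorem \ref{thm:GM-zeta}, together with the degree-sequence constraints of Section \ref{sec:ihara-prop}, already shows that a $1-o(1)$ fraction of these pairs is split by $Z^*$. What remains are the constructions that genuinely produce non-DZ* graphs: $k$-GM* switching and Construction \ref{our-construction}. Both are cut out of GM switching by additional linear constraints — constant row sums on each $N_i$, respectively the commutation relations forcing the existence of $Q$ and $S$ — so one expects each to contribute only $o(g_n)$ graphs; and every pair they produce has the same $\phi_{ADJ}$, hence is non-DS. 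So the steps are: (a) enumerate, up to $o(\#\text{non-DS})$, the sources of non-DZ* pairs; (b) verify each such source is $o(g_n)$ and lands inside the non-DS count; (c) combine with the GM-dominance input to conclude $\#\{\text{non-DZ*}\}=o(\#\{\text{non-DS}\})$. Lemma \ref{lem:ds} and the finer degree constraints would be used to dispose of small ad hoc families along the way.

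The hard part — and it is the same wall that blocks a proof of Haemers' conjecture — is step (a): there is no classification of cospectral graphs, let alone of $\phi_{T^*}$-cospectral graphs, and one cannot presently rule out an exotic family of $\phi_{T^*}$-cospectral pairs whose cardinality is comparable to $g_n$, or at least not $o(\#\text{non-DS})$. A self-contained attack would instead try to bound directly the number of graphs $H$ on $n+1$ vertices that arise as cones and fail to be determined among cones by $\phi_{T^*}$ — for instance by an entropy or switching-class argument in the spirit of Haemers--Spence applied to Hashimoto's oriented-edge matrix $T^*$ rather than to $A$ — but I do not see how to push such a bound to the required strength $o(g_n)$ with current techniques, since $T^*$ is a $2m^*\times2m^*$ matrix with a great deal of combinatorial rigidity but no obvious counting handle. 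Absent that, the realistic deliverables are a conditional theorem (``Conjecture \ref{conj:main} holds assuming Haemers' conjecture together with the statement that GM-type switching and Construction \ref{our-construction} account for all but $o(g_n)$ of the non-DS graphs'') and sharper explicit estimates refining Table \ref{tab:main} for moderate $n$, which would at least pin down the leading-order contributions (2-GM*, 3-GM*, and Construction \ref{our-construction}) and indicate whether any further construction is needed.
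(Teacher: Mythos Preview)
The statement is a conjecture, and the paper offers no proof---only the heuristic discussion immediately following it (GM switching appears to account for a significant fraction of non-DS graphs; Theorem~\ref{thm:GM-zeta} shows most such pairs are split by $Z^*$; GM* switching, which is not split, appears to be a trivial fraction of GM switching). Your program is a faithful and somewhat more articulated version of exactly this heuristic, and you correctly identify the hard obstruction: without a classification of $\phi_{T^*}$-cospectral (or even $A$-cospectral) graphs, step (a) cannot currently be carried out, which is why this remains a conjecture.

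One small logical slip: you do not need Haemers' conjecture to deduce (i) from (iii). Since $\#\mathcal H_n \le g_n$ always, (iii) gives
\[
\frac{\#\mathcal H'_n}{g_n} \;=\; \frac{\#\mathcal H'_n}{\#\mathcal H_n}\cdot\frac{\#\mathcal H_n}{g_n} \;\le\; \frac{\#\mathcal H'_n}{\#\mathcal H_n} \;\to\; 0,
\]
so (iii) implies (i) unconditionally. (The paper likewise notes that (iii) is strictly stronger than (ii), and that (ii) is in turn much stronger than (i).)
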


In light of Haemers' conjecture that almost all graphs are DS,  (ii)
seems to be much stronger than (i).  Note (iii) is strictly stronger than (ii) as not all DS graphs are DZ*.
One reason to believe (ii) is that it appears a significant fraction
of non-DS graphs are explained by GM switching.  Theorem \ref{thm:GM-zeta} suggests that most of these
are distinguished by (M1)--(M4).  Further, most of the graphs not distinguished by (M1)--(M4) are explained
by GM* switching, which appears to account for just a trivial fraction of graphs obtained by GM switching
(see \cite{HS}).
Note this reasoning does not apply to distinguishing graphs which are not $L$-DS or $|L|$-DS.

In fact, Theorem \ref{thm:GM-zeta} suggests that just the Ihara zeta function $Z_G$ by itself might
differentiate most pairs of
graphs with the same adjacency spectrum.  However, if we are just looking at $Z_G$, it is more reasonable
to restrict to md2 graphs.  Let us say an md2 graph is DZ if there is no other md2 graph of the same order with
the same Ihara zeta function.

Setyadi--Storm \cite{SS} enumerated all pairs of connected md2 graphs with the same Ihara zeta function
for $n \le 11$.  This does not exactly tell us the number of non-DZ md2 graphs (even among
connected graphs), so we
did a similar enumeration to Table \ref{tab:main} for md2 graphs with $n \le 10$.
The results are in Table \ref{tab:md2}, with the column headings meaning the same things as in
Table \ref{tab:main}, except restricted to md2 graphs.  The only new column is the $Z$ column, which
is the number of non-DZ md2 graphs on $n$ vertices.

\begin{table}[!h]

\small
\caption{\label{tab:md2} Counting md2 graphs not determined by spectral invariants}
\centering
\begin{tabular}{r|r|r|r|r|r|r|r}
$n$  & \# graphs & $A$ & $L$ & $|L|$ & $Z$ & $\barth$ &  $\barth \bar \barth$ \\
\hline
$\le 6$ & 77  & 0 & 0 & 4 & 0 & 0 & 0 \\
7 & 510  & 26 & 64 & 37 &0  & 0 & 0 \\
8 & 7,459 & 744 &1,156 & 725 & 2 & 0 & 0 \\
9 & 197,867  & 32,713 & 31,353 & 13,878  & 6 & 2 & 2 \\
10 & 9,808,968 & 1,727,629 & 1,184,460 & 535,080 & 10,130 & 10,094 & 10,088
\end{tabular}
\end{table}

We remark that while computing this data, we discovered some small errors in the tables in \cite{SS} for $n=11, 12$.  Namely, in the cases where more than 2 connected md2 graphs have the same Ihara zeta function,
\cite{SS} undercounts the number of pairs.  For instance, for $n=10$, the entries in the \cite{SS}
tables should be augmented by 1 for $m=20, 21, 24, 25$.  In each of these cases, there is one triple of
connected md2 graphs all with the same zeta function.

This data, together with our work above, suggests that, when restricting to md2 graphs, there is little difference between
methods (M1)--(M4) and just using the Ihara zeta function.  Hence we are led to the following conjecture.

\begin{conj} \label{conj:md2}
Almost all md2 graphs are DZ, and almost all non-DS md2 graphs are DZ.
In fact, if $\mathcal H_n^{(2)}$
(resp.\ $\mathcal H_n^{\prime (2)}$)
is the set of non-DS (resp.\ non-DZ) md2 graphs of order $n$, then $\# \mathcal H_n^{\prime (2)} / \# \mathcal H_n^{(2)} \to 0$
as $n \to \infty$.
\end{conj}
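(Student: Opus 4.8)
We sketch a possible line of attack, reducing all three assertions to a single counting estimate. Since $G(n,\tfrac12)$ has a vertex of degree $\le 1$ with probability $O(n^2 2^{-n})$, almost every graph on $n$ vertices is md2, so the number of md2 graphs of order $n$ is $(1-o(1))\,g_n$. For an md2 graph $G$ one has $G^\pr = G$, so $Z_G$ determines $m$ (the degree of $Z_G(t)^{-1}$ being $2m$); hence knowing $Z_G$ and the order $n$ is equivalent, by the Bass formula \eqref{eq:bass-det}, to knowing the polynomial $\det\bigl((1-t^2)I - tA + t^2D\bigr)$ and $n$, and thus, via \eqref{eq:hashimoto-det}, to knowing the spectrum of the Hashimoto matrix $T$ and $n$; so being DZ is a generalized-cospectrality problem. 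All three parts of the conjecture follow from the estimate
\[
\#\{\,G : G \text{ md2 of order } n,\ G \text{ not DZ}\,\} = o\bigl(n^3 g_{n-1}\bigr).
\]
Indeed $n^3 g_{n-1}/g_n \to 0$ (like $n^4 2^{-n}$), which gives (i); and the corollary to Theorem~\ref{thm:GM-zeta} (after Haemers--Spence \cite{HS}) exhibits $\ge (\tfrac1{24}-o(1))\,n^3 g_{n-1}$ non-DS graphs, almost all of them md2, whence $\#\mathcal H_n^{(2)} \ge c\,n^3 g_{n-1}$ for some $c>0$; this together with the displayed bound gives (ii) and (iii).

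To prove the estimate one would argue that $Z_G$ pins down the isomorphism type of $G$ for all but $o(n^3 g_{n-1})$ md2 graphs $G$. Note that if $Z_G = Z_H$ with $G\not\simeq H$, both md2 of order $n$, then by \eqref{eq:zeta-def} the two graphs share every primitive closed geodesic count $a(\ell)$, hence all short-cycle counts and, together with $m$, the entire spectrum of $T$. The plan would be either (a) a switching argument bounding the size of each non-singleton zeta-equivalence class together with the number of graphs that lie in one, or (b) to extract from $Z_G$ a sub-invariant that is already generically complete — for instance the vector $\bigl(a_G(3),a_G(4),\dots,a_G(\lceil C\log n\rceil)\bigr)$ of short-geodesic counts — and to prove by a moment or entropy computation that this map is injective off an exceptional set of size $o(n^3 g_{n-1})$.

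The principal obstacle is that this is essentially a restricted analogue of Haemers' conjecture that almost all graphs are DS, and the switching/counting estimate just described is not available even for the ordinary adjacency spectrum; so an unconditional upper bound on the number of non-DZ md2 graphs is the genuine difficulty. What the present results do deliver is a conditional statement: Theorem~\ref{thm:GM-zeta} separates a $1-o(1)$ fraction of the GM pairs with a regular block on $4$ vertices (and one expects the analogue for larger blocks, which in any event produce far fewer pairs), while each remaining known family of zeta-cospectral md2 pairs — GM$^*$ switching, Construction~\ref{our-construction}, and the sporadic small examples behind Tables~\ref{tab:main} and \ref{tab:md2} — contributes only $O(g_{n-8}) = o(n^3 g_{n-1})$ graphs. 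Thus the conjecture would follow from the plausible but unproven claim that, up to $o(n^3 g_{n-1})$ exceptions, every non-DZ md2 graph arises from one of these mechanisms — i.e.\ that there is no abundant, as yet undiscovered, family of $T$-cospectral graphs. Ruling that out is precisely the step for which we currently have only the numerical evidence of Tables~\ref{tab:main} and \ref{tab:md2}.
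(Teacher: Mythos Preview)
The statement you are attempting to prove is labeled \texttt{conj} in the paper: it is a \emph{conjecture}, not a theorem, and the paper offers no proof. The support given in the paper is exactly what you cite at the end of your write-up --- the numerical evidence of Tables~\ref{tab:main} and \ref{tab:md2}, together with the heuristic that GM switching accounts for a significant fraction of non-DS graphs while Theorem~\ref{thm:GM-zeta} separates most such pairs by zeta functions, and that GM* switching (which zeta functions cannot separate) appears to account for only a trivial fraction.

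Your proposal is therefore not really a proof attempt but an honest analysis of what a proof would require, and on that level it is accurate. The reduction of all three parts to the single estimate $\#\mathcal H_n^{\prime(2)} = o(n^3 g_{n-1})$ is correct and clean. Your identification of the ``principal obstacle'' is exactly right: the needed upper bound on non-DZ md2 graphs is a restricted analogue of Haemers' DS conjecture, and no technique is known that yields such a bound even for the adjacency spectrum. The paper makes no claim to have overcome this; it simply records the conjecture. So there is no gap between your understanding and the paper's --- but you should be clear that what you have written is a discussion of why the statement is conjectural, not a proof of it.
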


Since almost all graphs are md2 for $n$ large, this first statement would imply that almost all graphs of order $n$
are determined by their Ihara zeta function.

\medskip
We performed the above calculations by generating the graphs with fixed $n$ and $m$ using nauty \cite{nauty} in Sage 6.1.1
\cite{sage}.   Then we used Sage to compute $\det|L| = \lim_{t \to -1} (1-t^2)^{n-m} Z_G(-1)^{-1}$ and
$\det(4D+2A-3I) = (-3)^{n-m} Z_G(-2)^{-1}$ for each $G$, and similarly for $G^*$ and $\bar G$.  By sorting the graphs
according to these values for $G^*$ or $G$ and $\bar G$, we made a list of candidate non-DZ* and non-\DZZ\ graphs.
For each of these pairs, we check to see if the corresponding Ihara zeta functions match up.
We compute the Ihara zeta functions by first constructing $G^\pr$, then using the Hashimoto determinant formula.  This
is much faster than using Sage 6.1.1's built-in function to compute $Z_G(t)^{-1}$.  To conserve memory, for $n = 10, 11$,
we wrote out the data of the special values of the zeta function with the graph6 string to a file for fixed $n$ and $m$,
and sorted these files using the Unix {\verb+sort+} tool.  Then for each pair of graphs with the same $Z_G$ and $Z_{\bar G}$,
we compared their generalized characteristic polynomials $\phi_{ADJ}$.  This takes care of (M1), (M2) and (M4).
We will return to (M3) momentarily.

This method of making a first pass by checking just 2 values of the zeta function $Z_G(-1)$ and $Z_G(-2)$ is based
on the heuristic idea that it is unlikely that 2 graphs $G_1$ and $G_2$ will have the same spectra for several
different random linear combinations of $A$ and $D$, unless they have the same spectra for all linear combinations
of $A$ and $D$.  (This is part of the reason we believe the Ihara zeta function has essentially the same distinguishing
power as the Bartholdi zeta function.)  From some computational experimentation, we discovered that
just knowing $n$, $m$ and the 2 values $Z_G(-1)$ and $Z_G(-2)$ almost always determines $Z_G(t)$ (for $n \le 11$).

This heuristic also suggests that using 2 independent spectra to distinguish graphs is much better than a single one.
To further test this idea, we enumerated graphs which are not determined by the following sets of spectra:
$A$; $A$ and $L$; $A$, $L$ and $|L|$; all three of these plus the Ihara zeta function, and finally $\phi_{AD}$.
These numbers are respectively given in the 3rd through 7th columns of Table \ref{tab:combine}.

\begin{table}[!h]

\small
\caption{\label{tab:combine} Counting graphs not determined by combining spectral invariants}
\centering
\begin{tabular}{r|r|r|r|r|r|r}
$n$  & \# graphs & $A$ & $AL$ & $AL|L|$ & $AL|L|Z$ & $\barth$ \\
\hline
$\le 5$ & 51 & 2 & 0 & 0 & 0 & 0 \\
6 & 156 & 10 & 0 & 0 &0 & 0\\
7 & 1,044   & 110 & 0  & 0 & 0& 0\\
8 & 12,346   & 1,722 & 0 & 0  &0 &0\\
9 & 274,668   & 51,038  & 82 & 2 & 2& 2 \\
10 & 12,005,168  & 2,560,606 & 13,948&  10,718 & 10,150 & 10,146 \\
11 & 1,018,997,864  &  215,331,676 & 1,468,790  & 1,361,246 & 1,353,498 & 1,353,426
\end{tabular}
\end{table}

Note that even though the numbers of graphs not distinguished by $A$ or $L$ or $|L|$ individually are
quite large (cf.\ Table \ref{tab:main}), the number of graphs not distinguished by combining 2 or more of these
shrinks drastically.  (We did not compute the numbers for combining $A$ and $|L|$ or $L$ and $|L|$ for all
$n \le 11$, but we expect a similar phenomenon to hold in these cases also.  For instance when $n=9$,
there are only 8 graphs not distinguished by $A$ and $|L|$, and 4,405 graphs not distinguished
by $L$ and $|L|$.)
This suggests that, say, using $A$ and either $L$ or $|L|$, to distinguish graphs is closer in
effectiveness to using (M1)--(M4) than to using just $A$ or just $L$ or just $|L|$.

The calculations for Table \ref{tab:combine} were obtained by comparing spectra via spectral moments,
and successively sieving out graphs.  E.g., once we know all pairs or graphs with the same $A$- and $L$-spectra,
we search through these to see which also have the same $|L|$-spectra.  To compute the last column (i.e., (M3)),
we looked at all pairs with the same $A$-, $L$-, $|L|$-spectra and Ihara zeta functions, and checked their
generalized characteristic polynomials $\phi_{AD}$.

\medskip
We conclude with a couple of final remarks about related calculations.

First, Brouwer and Spence \cite{BS2} find that quite large families can have the same adjacency
spectrum (46 graphs can have the same spectrum on 11 vertices, and this goes up to 128 on 12 vertices).
It seems much rarer for a large family of graphs to have the same zeta functions.  We found that
on 10 vertices there are 4 triples of graphs with the same $\phi_{ADJ}$, and no larger families.
On 11 vertices, there are
1,442 triples of graphs with the same $\phi_{ADJ}$, and 192 quadruples, but no larger families.

Second, one might wonder about using cones or complements with the usual spectra.  Knowing
the Laplacian spectrum of $G$ is equivalent to knowing it for $\bar G$ or $G^*$, but what about
$A$ or $|L|$?  From Corollary \ref{cor:cone-comp}, we know that knowing the $A$- or $|L|$-spectrum of $G$ and $\bar G$ implies the same for $G^*$ (and conversely knowing the $A$- or $|L|$-spectrum of $G^*$ and $\overline{G^*}$ implies the same for $G$).  In Table \ref{tab:cone-comp},
we enumerate the graphs $G$ on $n \le 10$ vertices which are not determined by
the following: spectrum of $G^*$, spectra of $G$ and $\bar G$, $|L|$-spectrum of $G^*$
and $|L|$-spectrum of $G$ and $\bar G$, listed respectively in columns 3 through 6.
The data in the $A \bar A$ columns is already in \cite{HS}.
We note there has been recent work towards showing almost all graphs are distinguished by
the $A$- and $\bar A$-spectra---see, e.g., \cite{wang} or \cite{vDH2}.

\begin{table}[!h]
\small
\caption{\label{tab:cone-comp} Counting graphs not determined by cones and complements for $A$ and $|L|$}
\centering
\begin{tabular}{r|r|r|r|r|r}
$n$  & \# graphs & $A^*$ & $A \bar A$ & $|L|^*$ & $ |L| \overline{|L|}$  \\
\hline
$\le 4$ & 17 & 0 & 0 & 2 & 2  \\
5 & 34 & 0 & 0 & 4 & 4  \\
6 & 156 & 0 & 0 & 16 & 16 \\
7 & 1,044  & 44 & 40 & 102 & 102\\
8 & 12,346  & 1,194 & 1,166 & 1,139 & 1,139 \\
9 & 274,668  & 44,120 & 43,811 & 18,748 & 18,748  \\
10 & 12,005,168 & 2,423,121 & 2,418,152 & 633,232 & 633,226 \\
\end{tabular}

\end{table}

We remark that for $A$- or $|L|$-spectra, there appears to be little difference between using
$G$ and $\bar G$ versus $G^*$ versus just $G$ (cf.\ Table \ref{tab:main}; see \cite{HS}, \cite{spence:web}
for the $A \bar A$ data when
$n=11, 12$).  However, unlike the case of $L$-spectra, there is at least some difference.
In particular, the $A$- or $|L|$-spectrum of $G^*$ does not always determine that
of $G$, but Table \ref{tab:cone-comp} suggests it usually does (particularly for $|L|$).

The coincidence of methods (M1) and (M2) on $n \le 11$ vertices means that for
graphs with $n \le 11$ vertices of size $m$, $Z_{G^*}$ always determines $Z_G$ and $Z_{\bar G}$,
and vice versa.
Consequently, for $n \le 11$ vertices, knowing $Z_{G^*}$ is equivalent to knowing
$Z_{(\bar G)^*}$.
By analogy with the $A$-, $L$-, and $|L|$-spectra,
Corollary \ref{cor:cone-comp} suggests that $Z_G$ and $Z_{\bar G}$ (at least almost) always
determine $Z_{G^*}$, and Table \ref{tab:cone-comp} suggests that $Z_{G^*}$ almost always
determines $Z_G$ and $Z_{\bar G}$.
That is, we expect (M1) and (M2) to be almost always equivalent, but there
seems to be no reason to expect this is always the case.  Similarly, we have no
reason to expect (M2) and (M4) always give the same results.

\begin{bibdiv}

\begin{biblist}

\bib{bartholdi}{article}{
   author={Bartholdi, Laurent},
   title={Counting paths in graphs},
   journal={Enseign. Math. (2)},
   volume={45},
   date={1999},
   number={1-2},
   pages={83--131},
   issn={0013-8584}
}

\bib{bass}{article}{
   author={Bass, Hyman},
   title={The Ihara-Selberg zeta function of a tree lattice},
   journal={Internat. J. Math.},
   volume={3},
   date={1992},
   number={6},
   pages={717--797},
   issn={0129-167X}
}

\bib{BS}{article}{
   author={Bayati, Paymun},
   author={Somodi, Marius},
   title={On the Ihara zeta function of cones over regular graphs},
   journal={Graphs Combin.},
   volume={29},
   date={2013},
   number={6},
   pages={1633--1646},
   issn={0911-0119}
}

\bib{BH}{book}{
   author={Brouwer, Andries E.},
   author={Haemers, Willem H.},
   title={Spectra of graphs},
   series={Universitext},
   publisher={Springer, New York},
   date={2012},
   pages={xiv+250},
   isbn={978-1-4614-1938-9}
}

\bib{BS2}{article}{
   author={Brouwer, A. E.},
   author={Spence, E.},
   title={Cospectral graphs on 12 vertices},
   journal={Electron. J. Combin.},
   volume={16},
   date={2009},
   number={1},
   pages={Note 20, 3 pp},
   issn={1077-8926}
}

\bib{buser}{book}{
    AUTHOR = {Buser, Peter},
     TITLE = {Geometry and spectra of compact {R}iemann surfaces},
    SERIES = {Modern Birkh\"auser Classics},
      NOTE = {Reprint of the 1992 edition},
 PUBLISHER = {Birkh\"auser Boston, Inc., Boston, MA},
      YEAR = {2010},
     PAGES = {xvi+454},
      ISBN = {978-0-8176-4991-3},
       URL = {http://dx.doi.org/10.1007/978-0-8176-4992-0}
}

\bib{CRS}{book}{
   author={Cvetkovi{\'c}, Drago{\v{s}}},
   author={Rowlinson, Peter},
   author={Simi{\'c}, Slobodan},
   title={An introduction to the theory of graph spectra},
   series={London Mathematical Society Student Texts},
   volume={75},
   publisher={Cambridge University Press, Cambridge},
   date={2010},
   pages={xii+364},
   isbn={978-0-521-13408-8}
}

\bib{cooper}{article}{
   author={Cooper, Yaim},
   title={Properties determined by the Ihara zeta function of a graph},
   journal={Electron. J. Combin.},
   volume={16},
   date={2009},
   number={1},
   pages={Research Paper 84, 14 pp},
   issn={1077-8926}
}

\bib{czarneski}{book}{
   author={Czarneski, Debra},
   title={Zeta functions of finite graphs},
   note={PhD Thesis, Louisiana State University and Agricultural \&
   Mechanical College},
   publisher={ProQuest LLC, Ann Arbor, MI},
   date={2005},
   pages={61},
   isbn={978-0542-25438-3}
}


\bib{GM}{article}{
   author={Godsil, C. D.},
   author={McKay, B. D.},
   title={Constructing cospectral graphs},
   journal={Aequationes Math.},
   volume={25},
   date={1982},
   number={2-3},
   pages={257--268},
   issn={0001-9054}
}

\bib{HS}{article}{
   author={Haemers, Willem H.},
   author={Spence, Edward},
   title={Enumeration of cospectral graphs},
   journal={European J. Combin.},
   volume={25},
   date={2004},
   number={2},
   pages={199--211},
   issn={0195-6698}
}

\bib{hashimoto}{article}{
   author={Hashimoto, Ki-ichiro},
   title={Zeta functions of finite graphs and representations of $p$-adic
   groups},
   conference={
      title={Automorphic forms and geometry of arithmetic varieties},
   },
   book={
      series={Adv. Stud. Pure Math.},
      volume={15},
      publisher={Academic Press, Boston, MA},
   },
   date={1989},
   pages={211--280}
}

\bib{hashimoto2}{article}{
   author={Hashimoto, Ki-ichiro},
   title={On zeta and $L$-functions of finite graphs},
   journal={Internat. J. Math.},
   volume={1},
   date={1990},
   number={4},
   pages={381--396},
   issn={0129-167X}
}

\bib{hashimoto3}{article}{
   author={Hashimoto, Ki-ichiro},
   title={Artin type $L$-functions and the density theorem for prime cycles
   on finite graphs},
   journal={Internat. J. Math.},
   volume={3},
   date={1992},
   number={6},
   pages={809--826},
   issn={0129-167X}
}

\bib{ihara}{article}{
   author={Ihara, Yasutaka},
   title={On discrete subgroups of the two by two projective linear group
   over ${\germ p}$-adic fields},
   journal={J. Math. Soc. Japan},
   volume={18},
   date={1966},
   pages={219--235},
   issn={0025-5645}
}

\bib{JN}{article}{
   author={Johnson, Charles R.},
   author={Newman, Morris},
   title={A note on cospectral graphs},
   journal={J. Combin. Theory Ser. B},
   volume={28},
   date={1980},
   number={1},
   pages={96--103},
   issn={0095-8956}
}

\bib{KL}{article}{
   author={Kim, Hye Kyung},
   author={Lee, Jaeun},
   title={A generalized characteristic polynomial of a graph having a
   semifree action},
   journal={Discrete Math.},
   volume={308},
   date={2008},
   number={4},
   pages={555--564},
   issn={0012-365X}
}

\bib{nauty}{article}{
   author={McKay, Brendan D.},
   author={Piperno, Adolfo},
   label = {nauty},
   title={Practical graph isomorphism, II},
   journal={J. Symbolic Comput.},
   volume={60},
   date={2014},
   pages={94--112},
   issn={0747-7171}
}

\bib{sage}{misc}{
  label          = {Sage},
  Author       = {W.\thinspace{}A. Stein and others},
  Organization = {The Sage Development Team},
  Title        = {{S}age {M}athematics {S}oftware ({V}ersion 6.1.1)},
  note         = {{\tt http://www.sagemath.org}},
  Year         = {2014}
}

\bib{schwenk}{article}{
   author={Schwenk, Allen J.},
   title={Almost all trees are cospectral},
   conference={
      title={New directions in the theory of graphs},
      address={Proc. Third Ann Arbor Conf., Univ. Michigan, Ann Arbor,
      MI},
      date={1971},
   },
   book={
      publisher={Academic Press, New York},
   },
   date={1973},
   pages={275--307}
}

\bib{scott-storm}{article}{
   author={Scott, Geoffrey},
   author={Storm, Christopher},
   title={The coefficients of the Ihara zeta function},
   journal={Involve},
   volume={1},
   date={2008},
   number={2},
   pages={217--233},
}

\bib{SS}{article}{
   author={Setyadi, A.},
   author={Storm, C. K.},
   title={Enumeration of graphs with the same Ihara zeta function},
   journal={Linear Algebra Appl.},
   volume={438},
   date={2013},
   number={1},
   pages={564--572},
   issn={0024-3795}
}

\bib{spence:web}{article}{
   author = {Spence, E.},
   title = {Personal homepage},
   note = {\url{http://www.maths.gla.ac.uk/~es}, accessed July 15, 2014}
}

\bib{ST}{article}{
   author={Stark, H. M.},
   author={Terras, A. A.},
   title={Zeta functions of finite graphs and coverings},
   journal={Adv. Math.},
   volume={121},
   date={1996},
   number={1},
   pages={124--165},
   issn={0001-8708}
}

\bib{terras}{book}{
   author={Terras, Audrey},
   title={Zeta functions of graphs: A stroll through the garden},
   series={Cambridge Studies in Advanced Mathematics},
   volume={128},
   publisher={Cambridge University Press, Cambridge},
   date={2011},
   pages={xii+239},
   isbn={978-0-521-11367-0}
}

\bib{vDH1}{article}{
   author={van Dam, Edwin R.},
   author={Haemers, Willem H.},
   title={Which graphs are determined by their spectrum?},
   note={Special issue on the Combinatorial Matrix Theory Conference
   (Pohang, 2002)},
   journal={Linear Algebra Appl.},
   volume={373},
   date={2003},
   pages={241--272},
   issn={0024-3795}
}

\bib{vDH2}{article}{
   author={van Dam, Edwin R.},
   author={Haemers, Willem H.},
   title={Developments on spectral characterizations of graphs},
   journal={Discrete Math.},
   volume={309},
   date={2009},
   number={3},
   pages={576--586},
   issn={0012-365X}
}

\bib{wang}{article}{
   author={Wang, Wei},
   title={Generalized spectral characterization of graphs revisited},
   journal={Electron. J. Combin.},
   volume={20},
   date={2013},
   number={4},
   pages={Paper 4, 13 pp},
   issn={1077-8926}
}

\bib{WLLX}{article}{
   author={Wang, Wei},
   author={Li, Feng},
   author={Lu, Hongliang},
   author={Xu, Zongben},
   title={Graphs determined by their generalized characteristic polynomials},
   journal={Linear Algebra Appl.},
   volume={434},
   date={2011},
   number={5},
   pages={1378--1387},
   issn={0024-3795}
}

\end{biblist}

\end{bibdiv}

\end{document}